\documentclass{amsart}
\oddsidemargin 0mm
\evensidemargin 0mm
\topmargin 0mm
\textwidth 160mm
\textheight 230mm
\tolerance=9999
\usepackage{amssymb,amstext,amsmath,amscd,amsthm,amsfonts,enumerate,graphicx,latexsym,stmaryrd,multicol}
\usepackage[usenames]{color}
\usepackage{setspace}
\usepackage{bm}
\usepackage[all]{xy}

\newtheorem{thm}{Theorem}[section]
\newtheorem{lem}[thm]{Lemma}
\newtheorem{prop}[thm]{Proposition}
\newtheorem{cor}[thm]{Corollary}
\theoremstyle{definition}

\newtheorem{conj}[thm]{Conjecture}

\newtheorem{rmk}[thm]{Remark}

\theoremstyle{remark}

\newtheorem*{ac}{Acknowledgments}
\newtheorem*{conv}{Convention}
\newtheorem*{proof of claim}{Proof of Claim}

\numberwithin{equation}{thm}

\def\cok{\operatorname{Coker}}
\def\depth{\operatorname{depth}}
\def\Ext{\operatorname{Ext}}

\def\G{\mathcal{G}}
\def\ge{\geqslant}
\def\height{\operatorname{ht}}
\def\Hom{\operatorname{Hom}}
\def\image{\operatorname{Im}}

\def\ker{\operatorname{Ker}}
\def\le{\leqslant}

\def\m{\mathfrak{m}}

\def\P{\mathbb{P}}
\def\p{\mathfrak{p}}

\def\Q{\mathbb{Q}}

\def\s{\mathrm{S}}
\def\spec{\operatorname{Spec}}

\def\syz{\Omega}

\def\Tor{\operatorname{Tor}}
\def\tr{\operatorname{Tr}}
\def\V{\mathrm{V}}
\def\x{\operatorname{X}}
\def\xx{\boldsymbol{x}}

\def\RHom{\operatorname{\mathbf{R}Hom}}
\def\S{\mathsf{S}}
\begin{document}
\allowdisplaybreaks
\title{Auslander--Reiten conjecture for normal rings}
\author{Kaito Kimura}
\address{Graduate School of Mathematics, Nagoya University, Furocho, Chikusaku, Nagoya 464-8602, Japan}
\email{m21018b@math.nagoya-u.ac.jp}
\thanks{2020 {\em Mathematics Subject Classification.} 13D07}
\thanks{{\em Key words and phrases.} (Auslander) transpose, Auslander--Reiten conjecture, Ext module, syzygy, Serre's condition, Tor module}

\begin{abstract}
In this paper, sufficient conditions for finitely generated modules over a commutative noetherian ring to be projective are given in terms of vanishing of Ext modules.
One of the main results of this paper asserts that the Auslander--Reiten conjecture holds true for every normal ring.
\end{abstract}
\maketitle
\section{Introduction}

Throughout the present paper, we assume that all rings are commutative and noetherian, that $R$ is a ring, and that $M$ is a finitely generated $R$-module.
Characterizing the projectivity of a module in terms of vanishing of Ext modules has been actively studied in ring theory; see \cite{Ar, secm, DEL, GT, HL, KOT, ST} for instance.
The main result of this paper is the following theorem.

\begin{thm}[Corollary \ref{S2 free height less than 2}]\label{AR conjecture (S2) introduction1}
Suppose that $R$ satisfies Serre's condition $(\S_2)$ and $M$ is locally of finite projective dimension in codimension one.
Then $M$ is a projective $R$-module if one of the following three conditions is satisfied, where $d=\dim R$.
\begin{enumerate}[\rm(1)]
\item
$\Ext_R^i(M,R)=\Ext_R^j(M^{\ast},M^{\ast})=0$ for all $1\le i\le d$ and $1\le j\le d-1$.
\item
$\Ext_R^i(M^{\ast},R)=\Ext_R^j(M,M)=0$ for all $1\le i\le d$ and $1\le j\le d-1$, and $M$ satisfies $(\S_2)$.
\item
$\Ext_R^i(M,R)=\Ext_R^j(M,M)=0$ for all $1\le i\le 2d+1$ and $1\le j\le d-1$.
\end{enumerate}
\end{thm}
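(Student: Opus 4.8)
The plan is to prove the equivalent assertion that $M_{\p}$ is free over $R_{\p}$ for every $\p\in\spec R$, by induction on $\height\p=\dim R_{\p}$. All hypotheses localize: Serre's condition $(\S_2)$ on $R$, local finiteness of projective dimension in codimension one, the condition $(\S_2)$ on $M$ in case (2), and the vanishing of the various $\Ext$ modules (cohomology commutes with localization). Hence, after localizing I may assume that $(R,\m)$ is local of dimension $d$ and that the conclusion already holds at every non-maximal prime; that is, I may assume $M$ is free on the punctured spectrum, and it then suffices to show that $M$ itself is free.

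For the base of the induction, suppose $\height\p\le 1$, so that $M_{\p}$ has finite projective dimension. Over a local ring, a module $N$ with $\pd N=n\ge 1$ satisfies $\Ext_R^{n}(N,R)\ne 0$ (dualize a minimal free resolution and apply Nakayama to the last differential), and $n=\pd N\le\dim R$. Thus in cases (1) and (3) the hypothesis $\Ext_R^{i}(M,R)=0$ for $1\le i\le d$ forces $\pd M_{\p}=0$; in case (2) the condition $(\S_2)$ on $M$ gives $\depth M_{\p}\ge\min(2,\dim R_{\p})$, whence the Auslander--Buchsbaum formula yields $\pd M_{\p}=0$ directly. In every case $M$ is free in codimension one, and this is what lets the duals behave as over a normal ring.

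The heart of the argument is the inductive step, with $d\ge 2$, so $\depth R\ge 2$ and $M$ free on the punctured spectrum; consequently every higher $\Ext$ module appearing below has finite length. First I would use the vanishing of $\Ext_R^{i}(M,R)$ (respectively $\Ext_R^{i}(M^{\ast},R)$) for $1\le i\le d$: applying $\Hom_R(-,R)$ to a free resolution shows that the dual complex is exact through degree $d$, so that $M^{\ast}$ (respectively $M^{\ast\ast}\cong M$, using that $M$ is reflexive in case (2)) is a $d$-th syzygy and hence of maximal depth $\ge\min(d,\depth R)=\depth R$. Writing $X$ for this maximal-depth module, which is free on the punctured spectrum, it remains to promote the self-$\Ext$ vanishing $\Ext_R^{j}(-,-)=0$ for $1\le j\le d-1$ into freeness of $X$. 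For this I would take the syzygy sequence $0\to\syz X\to F\to X\to 0$ and try to lift $\mathrm{id}_X$ along it: the obstruction lies in the finite-length group $\Ext_R^{1}(X,\syz X)$, and a successful lift exhibits $X$ as a direct summand of the free module $F$, hence free. To kill the obstruction I would dimension-shift, identifying it with higher $\Ext$ groups across the range $1\le j\le d-1$ and pinching each against modules of positive depth via the depth lemma, so that the finite-length obstruction is trapped inside extensions of positive-depth modules and must vanish.

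I expect this last point to be the main obstacle: the delicate matter is to verify that the bounded range $1\le j\le d-1$ of self-$\Ext$ vanishing is \emph{exactly} enough, via iterated dimension shifting, to annihilate the finite-length obstruction rather than merely to control it. The three cases are then reconciled as follows. Cases (1) and (2) are dual to one another: one runs the argument with the roles of $M$ and $M^{\ast}$ exchanged, using that over an $(\S_2)$ ring a module that is free in codimension one has reflexive $(\S_2)$ dual, so that $M^{\ast}$ automatically satisfies the structural hypotheses and freeness transfers back to $M$ through reflexivity. Case (3), which carries no information about $M^{\ast}$, instead runs the transpose/syzygy comparison through two passes rather than one, and the doubled range $1\le i\le 2d+1$ for $\Ext_R^{i}(M,R)$ is precisely what is consumed in replacing the missing vanishing of $\Ext_R^{i}(M^{\ast},R)$ by the vanishing of $\Ext$ of a second syzygy.
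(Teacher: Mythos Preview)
Your overall architecture---localize, induct on $\height\p$, reduce to the case where $(R,\m)$ is local with $M$ free on the punctured spectrum---matches the paper's, and your treatment of the base case $\height\p\le 1$ is fine. The genuine gap is the inductive step, which you yourself flag as ``the main obstacle'' but do not actually carry out.

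Concretely: in case (1) you set $X=M^{\ast}$ and propose to show that the obstruction class in $\Ext_R^{1}(X,\syz X)$ vanishes by ``dimension-shifting'' and the depth lemma. But dimension-shifting $\Ext_R^{1}(X,\syz X)$ to higher $\Ext_R^{j}(X,\syz^{j}X)$ requires $\Ext_R^{i}(X,R)=\Ext_R^{i}(M^{\ast},R)=0$, which is \emph{not} among the hypotheses of (1); you only know $\Ext_R^{i}(M,R)=0$. Knowing that the obstruction has finite length is not enough: nothing in your sketch forces that specific class to die, and no ``pinching against positive-depth modules'' makes sense here since $\Ext$ groups of finite length need not embed in anything of positive depth. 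Moreover, in the induction only the single vanishing $\Ext_R^{t-1}(M^{\ast},M^{\ast})=0$ with $t=\depth R_\p$ is available at each step (the range $1\le j\le d-1$ is consumed across different primes, not all at once), so an argument that tries to use the whole range simultaneously at a single prime is already misdirected.

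The paper fills this gap with a technique entirely absent from your sketch. Passing to the completion and taking a dualizing complex $D^{\bullet}$ with kernels $K^{i}=\ker\partial^{i}$, one uses Matlis-type duality $\Tor_j^R(-,\Hom(N,I))\cong\Hom(\Ext_R^{j}(-,N),I)$ together with $\Ext_R^{i}(M,R)=0$ for $1\le i\le t$ to show that the sequences $0\to M\otimes K^{d-i}\to M\otimes D^{d-i}\to M\otimes K^{d-i+1}\to 0$ are exact, and then shifts the single hypothesis $\Ext_R^{t-1}(M^{\ast},M^{\ast})\cong\Ext_R^{t+1}(\tr M,M^{\ast})=0$ down through these sequences to obtain $\Tor_1^R(\tr M,\,M\otimes_R K^{d-t})=0$. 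A separate computation shows $k\otimes_R K^{d-t}\ne 0$, and then a variant of the criterion ``$\Tor_1^R(\tr M,M\otimes_R N)=0$ with $k\otimes_R N\ne 0$ implies $M$ free'' finishes the job. The point is that the dualizing complex supplies the auxiliary module $N=K^{d-t}$ against which the transpose criterion can be run; your syzygy/obstruction approach has no substitute for this, and the paper explicitly remarks that canonical-module or depth-style arguments from the Cohen--Macaulay case do not survive to the $(\S_2)$ setting.
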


We can actually give a more general condition than (3); see Corollary \ref{S2 free height less than 2}(3).
Araya \cite{Ar} proved the same assertion as Theorem \ref{AR conjecture (S2) introduction1}(3) under the assumption that $R$ is Gorenstein.
Also, Kimura, Otake, and Takahashi \cite{KOT} extended Araya's theorem to the case where $R$ is Cohen--Macaulay.
The above theorem further refines these results.
Moreover, the above theorem and some of the results given as its corollary refine or recover a lot of results in the literature; see Remark \ref{AR conjecture over normal ring / improve and recover} for details.

Auslander and Reiten \cite{AR} conjectured that $M$ is projective if $\Ext_R^i(M,R\oplus M)=0$ for all $i>0$.
This is known as the \textit{Auslander--Reiten conjecture}, which is a celebrated long-standing conjecture and one of the most important problems in ring theory.
Applying the above theorem, we can give an answer to this conjecture.

\begin{cor}[Corollary \ref{AR conjecture over normal ring}]\label{AR conjecture (S2) introduction2}
Suppose that $R$ satisfies $(\S_2)$.
Then the Auslander--Reiten conjecture holds for $R$ if it locally holds for $R$ in codimension one.
In particular, the Auslander--Reiten conjecture holds true for every normal ring.
\end{cor}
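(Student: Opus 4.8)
The plan is to derive both assertions directly from the main theorem, Theorem \ref{AR conjecture (S2) introduction1} (that is, Corollary \ref{S2 free height less than 2}), using part (3). Suppose $R$ satisfies $(\S_2)$ and that the Auslander--Reiten conjecture holds for $R_\mathfrak{p}$ whenever $\height\mathfrak{p}\le 1$. Let $M$ be a finitely generated $R$-module with $\Ext_R^i(M,R\oplus M)=0$ for all $i>0$; we must show that $M$ is projective. The Ext-vanishing hypotheses demanded in Theorem \ref{AR conjecture (S2) introduction1}(3) are automatic here, since $\Ext_R^i(M,R)=\Ext_R^i(M,M)=0$ for every $i>0$. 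Thus the only remaining hypothesis to be checked is that $M$ is locally of finite projective dimension in codimension one.

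First I would establish this local finiteness by a localization argument. Fix a prime $\mathfrak{p}$ with $\height\mathfrak{p}\le 1$. Because $M$ is finitely generated over the noetherian ring $R$, localization commutes with Ext, so that $\Ext_{R_\mathfrak{p}}^i(M_\mathfrak{p},R_\mathfrak{p}\oplus M_\mathfrak{p})\cong\Ext_R^i(M,R\oplus M)_\mathfrak{p}=0$ for all $i>0$. Hence $M_\mathfrak{p}$ satisfies the hypothesis of the Auslander--Reiten conjecture over $R_\mathfrak{p}$, and by assumption the conjecture holds there, so $M_\mathfrak{p}$ is free; in particular $\pd_{R_\mathfrak{p}}M_\mathfrak{p}<\infty$. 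Therefore $M$ is locally of finite projective dimension in codimension one, and Theorem \ref{AR conjecture (S2) introduction1}(3) applies to conclude that $M$ is projective. This proves the first assertion.

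For the final statement, suppose $R$ is normal. By Serre's criterion $R$ satisfies $(\S_2)$ together with $(\mathrm{R}_1)$, so $R_\mathfrak{p}$ is regular for every prime $\mathfrak{p}$ with $\height\mathfrak{p}\le 1$. Over such a regular local ring every finitely generated module has finite projective dimension; moreover, if a module $N$ over a local ring $(S,\mathfrak{n})$ satisfies $\pd_S N=p$ with $1\le p<\infty$, then $\Ext_S^p(N,S)\neq 0$, as one sees by dualizing a minimal free resolution, whose differentials have entries in $\mathfrak{n}$. Consequently, if $\Ext_{R_\mathfrak{p}}^i(N,R_\mathfrak{p})=0$ for all $i>0$ then $N$ is free, so the Auslander--Reiten conjecture holds for $R_\mathfrak{p}$. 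Thus the conjecture holds locally for $R$ in codimension one, and the first assertion yields the conjecture for $R$ itself.

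The essential input is the main theorem, of which this corollary is a formal consequence, so I do not expect a serious obstacle. The only points requiring care are the compatibility of Ext with localization used in the reduction and the elementary verification that the conjecture is trivial over a regular local ring via the top-Ext computation; a minor caveat is that the invocation of $d=\dim R$ in Theorem \ref{AR conjecture (S2) introduction1}(3) presupposes finite Krull dimension, which should be noted or handled in the reduction.
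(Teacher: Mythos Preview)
Your argument is correct and follows essentially the same route as the paper: both apply Theorem \ref{AR conjecture (S2) introduction1}(3) (equivalently Corollary \ref{S2 free height less than 2}(4)) after checking that $M$ is locally of finite projective dimension in codimension one, and then specialize to the normal case via $(\mathrm{R}_1)$. The only cosmetic difference is that the paper disposes of your dimension caveat by first reducing to the local case (where $\dim R<\infty$), which you may wish to do as well.
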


The Auslander--Reiten conjecture is known to hold if $R$ is a locally excellent Cohen--Macaulay normal ring containing the field of rational numbers $\Q$ \cite{HL}, or if $R$ is a Gorenstein normal ring \cite{Ar}, or if $R$ is a local Cohen--Macaulay normal ring and $M$ is a maximal Cohen--Macaulay module of rank one \cite{GT}, or if $R$ is a local Cohen--Macaulay normal ring and $M$ is a maximal Cohen--Macaulay module such that $\Hom_R(M,M)$ is free \cite{DEL}.
Recently, Kimura, Otake, and Takahashi \cite{KOT} proved the conjecture for every Cohen--Macaulay normal ring, which recovers all the above results.
Even if $R$ is not Cohen--Macaulay, it is known that $R$ satisfies the conjecture if it is a normal ring and either $\Ext_R^i(\Hom_R(M,M),R)=0$ for all $2\le i\le\depth R$ or $\Hom_R(M,M)$ has finite G-dimension \cite{ST}, or if it is a quotient of a regular local ring and is a normal ring containing $\Q$ \cite{DEL}.
Corollary \ref{AR conjecture (S2) introduction2} improves all of these results about the Auslander--Reiten conjecture.

All of our results, including the above ones, are discussed in Section 2.
It is worth mentioning that we shall prove most of the results of Kimura, Otake and Takahashi \cite{KOT} without assuming that the ring is Cohen--Macaulay.
Throughout \cite{KOT}, the existence of a canonical module can be assumed since the ring is Cohen--Macaulay.
Some properties of a canonical module, such as the finiteness of injective dimension and the duality of the category of maximal Cohen--Macaulay modules given by the canonical dual, do play a crucial role in [10].
As we do not assume Cohen--Macaulayness, we cannot use those methods in this paper.
However, we can achieve our purpose by investigating the structure of the tail part of a dualizing complex in detail.

We close the section by stating our convention.

\begin{conv}
Let $n\ge 0$ be an integer.
We say that $R$ satisfies {\em Serre's condition} $(\S_n)$ if $\depth R_\p \ge {\rm inf}\{n, \height\p \}$ for all prime ideals $\p$ of $R$.
Denote by $E_R(M)$ the injective hull of $M$.
We denote by $(-)^\ast$ the $R$-dual $\Hom_R(-,R)$.
For a property $\P$ of local rings (resp. of modules over a local ring) and a subset $X$ of $\spec R$, we say that $R$ (resp. an $R$-module $M$) {\em locally satisfies $\P$ on $X$} if the local ring $R_\p$ (resp. the $R_\p$-module $M_\p$) satisfies $\P$ for every $\p\in X$.
When $X=\{\p\in\spec R\mid\height\p\le n\}$, we say that $R$ (resp. $M$) {\em locally satisfies $\mathbb{P}$ in codimension $n$}. 
Let $R$ be a local ring, and let $F=(\cdots\to F_2\to F_1\xrightarrow{\alpha}F_0\to0)$ be a minimal free resolution of $M$.
The {\em syzygy} $\syz_R M$ and the {\em (Auslander) transpose} $\tr_R M$ of $M$ is defined as $\image \alpha$ and $\cok(\alpha^\ast)$, respectively.
We put $\syz_R^0M=M$, and the {\em $n$th syzygy} $\syz_R^n M$ is defined inductively by $\syz_R^n M=\syz_R(\syz_R^{n-1}M)$.
(Note that the modules $\tr_R M$ and $\syz_R^nM$ for each $n$ are uniquely determined up to isomorphism since so is a minimal free resolution $F$ of $M$ and that $M^\ast\cong\syz_R^2\tr_R M$ up to free summands.)
We omit subscripts/superscripts if there is no ambiguity.
\end{conv}

\section{The results}

In this section, we provide several criteria for a module to be projective in terms of Ext vanishing, and prove Theorem \ref{AR conjecture (S2) introduction1} and Corollary \ref{AR conjecture (S2) introduction2} stated in the Introduction.
We prepare several lemmas to state the main results of this paper.

\begin{lem}\label{Tor Ext injective}
Let $N$ be an $R$-module, and let $I$ be an injective $R$-module.
Then there is an isomorphism $\Tor_i^R(M,\Hom(N,I))\cong\Hom(\Ext_R^{i}(M,N),I)$ for every integer $i\ge 0$.
\end{lem}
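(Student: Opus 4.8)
The plan is to compare two complexes built from a single free resolution of $M$, exploiting the exactness of $\Hom(-,I)$. First I would choose a free resolution $P_\bullet\to M$ in which each $P_n$ is a finitely generated free $R$-module; this is possible because $M$ is finitely generated and $R$ is noetherian. By definition, $\Tor_i^R(M,\Hom(N,I))$ is the $i$th homology of the complex $P_\bullet\otimes_R\Hom(N,I)$, while $\Ext_R^i(M,N)$ is the $i$th cohomology of the complex $\Hom(P_\bullet,N)$.

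The key point is the tensor-evaluation morphism. For each $n$ there is a natural homomorphism
$$\eval_{P_n}\colon P_n\otimes_R\Hom(N,I)\to\Hom(\Hom(P_n,N),I),\qquad x\otimes f\mapsto\big(\phi\mapsto f(\phi(x))\big),$$
which is natural in $P_n$. When $P_n=R$ both sides reduce to $\Hom(N,I)$ and $\eval_{P_n}$ is the identity, so by additivity $\eval_{P_n}$ is an isomorphism for every finitely generated free module $P_n$. Here the finite generation of $M$ enters in an essential way, since the evaluation map need not be an isomorphism for free modules of infinite rank.

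By naturality, the maps $\eval_{P_n}$ assemble into an isomorphism of complexes $P_\bullet\otimes_R\Hom(N,I)\xrightarrow{\ \sim\ }\Hom(\Hom(P_\bullet,N),I)$. Taking the $i$th homology of the left-hand side gives $\Tor_i^R(M,\Hom(N,I))$. For the right-hand side I would use that $I$ is injective, so that $\Hom(-,I)$ is exact and hence commutes with passage to (co)homology: the $i$th homology of $\Hom(\Hom(P_\bullet,N),I)$ equals $\Hom(-,I)$ applied to the $i$th cohomology of $\Hom(P_\bullet,N)$, namely $\Hom(\Ext_R^i(M,N),I)$. Composing these identifications produces the desired isomorphism for every $i\ge0$.

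The only genuine subtlety, and the step I would treat most carefully, is the bookkeeping that turns the cochain complex $\Hom(P_\bullet,N)$ into the chain complex $\Hom(\Hom(P_\bullet,N),I)$ with matching indices, together with the verification that $\eval$ commutes with the differentials so that it is a bona fide morphism of complexes (this uses the covariance of the composite functor $\Hom(\Hom(-,N),I)$). Everything else is a routine consequence of the exactness of $\Hom(-,I)$ and the behaviour of $\eval$ on finitely generated free modules.
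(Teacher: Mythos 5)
Your proof is correct and is essentially the paper's argument made explicit: the paper simply cites the quasi-isomorphism $M\otimes_R^{\mathbf{L}}\RHom_R(N,I)\simeq\RHom_R(\RHom_R(M,N),I)$, and your chain-level construction via the tensor-evaluation isomorphism on a resolution by finitely generated free modules, followed by the exactness of $\Hom(-,I)$, is precisely the verification of that quasi-isomorphism. No gaps; the points you flag (finite rank of the $P_n$ and compatibility of $\eval$ with the differentials) are exactly the right ones to check.
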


\begin{proof}
There is a quasi-isomorphism $M\otimes_R^{\mathbf{L}} \RHom_R(N,I)\simeq\RHom_R(\RHom_R(M,N),I)$.
\end{proof}

\begin{lem}\label{Functor injective support prime zero}
Let $F$ be an $R$-linear functor on the category of $R$-modules, and let $\p$ be a prime ideal of $R$.
If $F(E_R(R/\p))_\p$ is the zero module, then so is $F(E_R(R/\p))$.
\end{lem}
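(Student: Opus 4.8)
The plan is to exploit the special nature of $E:=E_R(R/\p)$ as a $\p$-local module, together with the hypothesis that $F$ is $R$-linear.

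First I would record the standard structural fact that the localization map $E\to E_\p$ is an isomorphism; equivalently, that multiplication by every element $s\in R\setminus\p$ is an automorphism of $E$. Concretely: since $R/\p$ is an essential submodule of $E$ and $s$ is a nonzerodivisor on $R/\p$, the submodule $(0:_E s)$ meets $R/\p$ trivially and hence vanishes, so the homothety $s\cdot\mathrm{id}_E$ is injective. As $E$ is indecomposable injective and the image $sE\cong E$ is again injective (hence a direct summand of $E$), the homothety $s\cdot\mathrm{id}_E$ is also surjective, and thus an automorphism.

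The key step is then to transport this invertibility across $F$. Because $F$ is $R$-linear and functorial, the computation $F(s\cdot\mathrm{id}_E)=s\cdot F(\mathrm{id}_E)=s\cdot\mathrm{id}_{F(E)}$ holds; since any functor preserves isomorphisms, $s\cdot\mathrm{id}_{F(E)}$ is an automorphism of $F(E)$ for every $s\in R\setminus\p$. (This is insensitive to whether $F$ is covariant or contravariant, as both preserve the identity and send isomorphisms to isomorphisms.) Consequently $F(E)$ is itself a $\p$-local module, so the natural map $F(E)\to F(E)_\p$ is an isomorphism. The hypothesis $F(E)_\p=0$ then forces $F(E)=0$, as desired.

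The main obstacle — indeed the only point carrying real content — is the passage $F(s\cdot\mathrm{id}_E)=s\cdot\mathrm{id}_{F(E)}$, which is precisely where the $R$-linearity of $F$ is invoked. Once the invertibility of each such $s$ on $F(E)$ is established, the conclusion is a formality about localization at $\p$.
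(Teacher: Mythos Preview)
Your proof is correct and follows essentially the same approach as the paper: both arguments hinge on the observation that each $s\in R\setminus\p$ acts invertibly on $E_R(R/\p)$, and use the $R$-linearity of $F$ to conclude that $s$ acts invertibly on $F(E_R(R/\p))$. The only cosmetic difference is that you conclude directly that $F(E)$ is $\p$-local (hence isomorphic to $F(E)_\p=0$), whereas the paper argues by contradiction via a nonzero finitely generated submodule $N\subseteq F(E)$ that would be killed by some $r\in R\setminus\p$.
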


\begin{proof}
Suppose $F(E_R(R/\p))\ne 0$.
There is a finitely generated nonzero submodule $N$ of $F(E_R(R/\p))$.
Since $N_\p=0$, we can choose an element $r\in R\setminus\p$ such that $rN=0$.
The multiplication by $r$ on $E_R(R/\p)$ is an isomorphism, and thus so is the multiplication by $r$ on $F(E_R(R/\p))$, which is a contradiction.
\end{proof}

The following lemma improves \cite[Proposition 3.3(1)]{KOT}.
Assuming the non-vanishing of the tensor product $k\otimes_R N$, the assumption that $N$ is finitely generated is removed.

\begin{lem}\label{trans tor1 free lemma non f.g.}
Let $(R, \m, k)$ be a local ring, and let $N$ be an $R$-module such that $k\otimes_R N$ is nonzero.
Suppose that $\Tor_1^R(\tr M, M\otimes_R N)=0$.
Then $M$ is a free $R$-module.
\end{lem}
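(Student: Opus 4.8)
The plan is to reduce the statement to a dual-basis argument via the standard identification of $\Tor_1^R(\tr M,-)$ with the cokernel of an evaluation map. Recall that from the presentation $F_0^\ast\xrightarrow{\alpha^\ast}F_1^\ast\to\tr M\to0$, together with the left exactness of $\Hom_R(-,L)$ applied to $F_1\xrightarrow{\alpha}F_0\to M\to0$, one obtains for every $R$-module $L$ a natural isomorphism $\Tor_1^R(\tr M,L)\cong\cok(\theta_L)$, where $\theta_L\colon M^\ast\otimes_R L\to\Hom_R(M,L)$ is the evaluation map $\phi\otimes\ell\mapsto(m\mapsto\phi(m)\ell)$. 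Thus the hypothesis $\Tor_1^R(\tr M,M\otimes_R N)=0$ says precisely that $\theta_{M\otimes_R N}\colon M^\ast\otimes_R M\otimes_R N\to\Hom_R(M,M\otimes_R N)$ is surjective. The whole point is to feed the single nonzero vector in $k\otimes_R N$ into this surjectivity.

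First I would use $k\otimes_R N\neq0$ to pick $n\in N$ with $n\notin\m N$ and build an $R$-linear map $g\colon N\to k$ with $g(n)=1$ (extend $\bar n\neq0$ to a $k$-linear functional on $N/\m N$ and precompose with $N\twoheadrightarrow N/\m N$). Consider the section $\psi_n\colon M\to M\otimes_R N$, $m\mapsto m\otimes n$, as an element of $\Hom_R(M,M\otimes_R N)$. Since $\theta_{M\otimes_R N}$ is surjective, $\psi_n=\theta_{M\otimes_R N}(\xi)$ for some $\xi\in M^\ast\otimes_R M\otimes_R N$. Now apply naturality of $\theta$ along $h:=M\otimes g\colon M\otimes_R N\to M\otimes_R k=M/\m M$; writing $\overline{(-)}$ for reduction modulo $\m$ and identifying $\Hom_R(M,M/\m M)=\operatorname{End}_k(M/\m M)$, naturality gives $h\circ\psi_n=\theta_{M/\m M}\bigl((M^\ast\otimes h)(\xi)\bigr)$. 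But $h\circ\psi_n$ sends $m\mapsto m\otimes g(n)=\bar m$, i.e. it is the canonical projection $M\twoheadrightarrow M/\m M$, which corresponds to $\operatorname{id}_{M/\m M}$. Hence $\operatorname{id}_{M/\m M}\in\image(\theta_{M/\m M})$.

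Unwinding this membership finishes the proof. Writing $(M^\ast\otimes h)(\xi)=\sum_i\phi_i\otimes\overline{v_i}$ with $\phi_i\in M^\ast$ and $v_i\in M$, the relation $\theta_{M/\m M}(\sum_i\phi_i\otimes\overline{v_i})=\operatorname{id}_{M/\m M}$ reads $\sum_i\overline{\phi_i(m)}\,\overline{v_i}=\bar m$ for all $m\in M$; that is, the endomorphism $\Psi:=\sum_i\phi_i(-)v_i\in\operatorname{End}_R(M)$ satisfies $(\operatorname{id}_M-\Psi)(M)\subseteq\m M$. By Nakayama's lemma $\Psi$ is surjective, hence an isomorphism, and since $\Psi$ factors as $M\xrightarrow{(\phi_i)}R^{s}\xrightarrow{(v_i)}M$ through a finite free module, $M$ is a direct summand of $R^s$, i.e. a projective, hence free, $R$-module.

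The only genuinely delicate step is the middle one: this is where the absence of a finite generation hypothesis on $N$ is accommodated. In the finitely generated setting of \cite{KOT} one can reduce modulo $\m$ and invoke Nakayama on $N$ itself, but here finiteness is replaced by the single functional $g\colon N\to k$ detecting the chosen $n\notin\m N$, combined with the naturality of evaluation. I expect the verification that $h\circ\psi_n$ is exactly the projection $M\to M/\m M$ (and the bookkeeping identifying $\Hom_R(M,M/\m M)$ with $\operatorname{End}_k(M/\m M)$) to be the part needing care; the homological identity $\Tor_1^R(\tr M,L)\cong\cok(\theta_L)$ and the concluding Nakayama argument are routine.
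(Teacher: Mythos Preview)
Your proof is correct and proceeds along a somewhat different line from the paper's. Both arguments rest on the Auslander--Bridger identification $\Tor_1^R(\tr M,L)\cong\cok(\theta_L)$, but the paper finishes via the \emph{trace ideal}: after reducing to $M$ nonzero indecomposable, it shows (following the argument of \cite[Proposition~3.3(1)]{KOT}) that $(R/\image\lambda)\otimes_R(M\otimes_R N)=0$ for the trace map $\lambda\colon M\otimes M^\ast\to R$, and then uses $k\otimes_R(M\otimes_R N)\neq0$ to force $\image\lambda=R$, whence $R$ is a summand of $M$ and $M\cong R$ by indecomposability (invoking \cite{L}). Your route instead pushes the surjectivity of $\theta_{M\otimes_R N}$ forward along a single functional $g\colon N\to k$ to place $\operatorname{id}_{M/\m M}$ in the image of $\theta_{M/\m M}$, and concludes with a direct dual-basis/Nakayama argument. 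The advantage of your approach is that it is entirely self-contained---no reduction to indecomposables, no appeal to \cite{KOT} or \cite{L}---while the paper's version makes the role of the trace ideal more transparent and connects the statement to that circle of ideas.
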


\begin{proof}
We may assume that $M$ is nonzero and indecomposable.
Let $\lambda:M\otimes M^\ast\to R$ be the homomorphism given by $\lambda(x\otimes f)=f(x)$ for $x\in M$ and $f\in M^\ast$.
If $\lambda$ is surjective, then $R$ is isomorphic to a direct summand of $M$, and thus we have $M\cong R$ as $M$ is indecomposable; see \cite[Proposition 2.8(iii)]{L} for instance.
Therefore, it suffices to show $\image\lambda=R$.
Set $L = M\otimes_R N$.
An analogous argument to the proof of \cite[Proposition 3.3(1)]{KOT} shows that $(R/\image\lambda)\otimes_R L=0$.
On the other hand, since $k\otimes_R N$ and $M$ are nonzero, so is $k\otimes_R L$.
It means that $\image\lambda=R$.
\end{proof}

One of the main results of this paper is the theorem below.
It plays an essential role in proving Theorem \ref{AR conjecture (S2) introduction1}.

\begin{thm}\label{punc spec free depth t}
Let $(R, \m, k)$ be a local ring of depth $t$.
Suppose that $\Ext_R^i(M,R)=0$ for all $1\le i\le t$ and $\Ext_R^{t+1}(\tr M,M^{\ast})=0$, and that $M$ is locally free on the punctured spectrum of $R$.
Then $M$ is free.
\end{thm}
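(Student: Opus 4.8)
The plan is to reduce the freeness of $M$ to a single vanishing $\Tor_1^R(\tr M,M\otimes_R N)=0$ for a module $N$ with $k\otimes_R N\ne 0$, and then to invoke Lemma \ref{trans tor1 free lemma non f.g.}. The first move is to rewrite both hypotheses through Lemma \ref{Tor Ext injective}. For every injective $R$-module $I$, the case $i=0$, $N=R$ of that lemma together with $\Hom_R(R,I)=I$ yields a natural isomorphism $M\otimes_R I\cong\Hom_R(M^\ast,I)$, and hence $\Tor_i^R(\tr M,M\otimes_R I)\cong\Hom_R(\Ext_R^i(\tr M,M^\ast),I)$ for all $i$. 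Thus the hypothesis $\Ext_R^{t+1}(\tr M,M^\ast)=0$ becomes $\Tor_{t+1}^R(\tr M,M\otimes_R I)=0$ for every injective $I$, while $\Ext_R^i(M,R)=0$ for $1\le i\le t$ becomes $\Tor_i^R(M,E_R(k))=0$ for $1\le i\le t$.

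Next I would feed in local freeness on the punctured spectrum via Lemma \ref{Functor injective support prime zero}. For a prime $\p\ne\m$ the module $M_\p$ is free, hence so is $(\tr M)_\p$, so the $R$-linear functor $F_i=\Tor_i^R(\tr M,M\otimes_R-)$ satisfies $F_i(E_R(R/\p))_\p=0$ for $i\ge 1$, and Lemma \ref{Functor injective support prime zero} upgrades this to $F_i(E_R(R/\p))=0$. Consequently the only injective hulls on which positive $\Tor$'s survive are the copies of $E_R(k)$, and since $\depth R=t$ these first appear in cohomological degree $t$ of the minimal injective resolution of $R$. The same localization shows that $\Ext_R^i(\tr M,M^\ast)$ is of finite length for every $i\ge 1$.

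To lower the homological degree from $t+1$ to $1$ I would use a maximal $R$-regular sequence $\xx=x_1,\dots,x_t$ and set $N=\Hom_R(R/\xx R,E_R(k))$, the injective hull of $k$ over $\bar R=R/\xx R$. Since $\depth\bar R=0$ one has $k\otimes_R N\ne 0$. Applying Lemma \ref{Tor Ext injective} twice, first $M\otimes_R N\cong\Hom_R(\Hom_R(M,\bar R),E_R(k))$ and then $\Tor_1^R(\tr M,M\otimes_R N)\cong\Hom_R(\Ext_R^1(\tr M,\Hom_R(M,\bar R)),E_R(k))$, reduces the target to $\Ext_R^1(\tr M,\Hom_R(M,\bar R))=0$ (the Matlis dual detects vanishing here because $\Ext_R^1(\tr M,\Hom_R(M,\bar R))$ is finitely generated). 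This is exactly where the vanishing $\Ext_R^i(M,R)=0$ for $1\le i\le t$ is consumed: resolving $M$ by finitely generated projectives gives $\RHom_R(M,\bar R)\simeq\RHom_R(M,R)\otimes_R^{\mathbf L}\bar R$, and in the Koszul spectral sequence for $-\otimes_R^{\mathbf L}\bar R$ every potential contribution in degree $0$ coming from $\Ext_R^q(M,R)$ with $1\le q\le t$ dies, so that $\Hom_R(M,\bar R)\cong M^\ast/\xx M^\ast$.

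The remaining and hardest step is the degree transfer: I must deduce $\Ext_R^1(\tr M,M^\ast/\xx M^\ast)=0$ from $\Ext_R^{t+1}(\tr M,M^\ast)=0$ across the length-$t$ sequence $\xx$. Via the base-change isomorphism $\RHom_R(\tr M,M^\ast)\otimes_R^{\mathbf L}\bar R\simeq\RHom_R(\tr M,M^\ast\otimes_R^{\mathbf L}\bar R)$ and the Koszul resolution of $\bar R$, the group $\Ext_R^1(\tr M,M^\ast/\xx M^\ast)$ is governed by the modules $\Ext_R^i(\tr M,M^\ast)$ for $1\le i\le t+1$ together with the Koszul homologies $H_q(\xx;M^\ast)$; the top term $\Ext_R^{t+1}(\tr M,M^\ast)$ vanishes by hypothesis, but the lower terms (and the $H_q(\xx;M^\ast)$ for $q\ge 1$, which measure the failure of $\xx$ to be $M^\ast$-regular when $t\ge 3$) are the obstruction. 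All of these correction terms are of finite length by the punctured-spectrum step, so the crux is to kill them using Lemma \ref{Functor injective support prime zero} and a precise analysis of the successive $x_j$-multiplications on the tail of the injective resolution of $R$ — this is the main difficulty, and the point at which the fine structure of that tail (the dualizing-complex tail in the global setting) must be exploited.
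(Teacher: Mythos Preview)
Your overall plan---produce a module $N$ with $k\otimes_R N\ne 0$ and $\Tor_1^R(\tr M,M\otimes_R N)=0$, then apply Lemma \ref{trans tor1 free lemma non f.g.}---is exactly right, and your use of Lemmas \ref{Tor Ext injective} and \ref{Functor injective support prime zero} is on target. The gap is your choice of $N=\Hom_R(R/\xx R,E_R(k))$. As you yourself note, the step from $\Ext_R^{t+1}(\tr M,M^\ast)=0$ to $\Ext_R^1(\tr M,M^\ast/\xx M^\ast)=0$ is obstructed by the lower groups $\Ext_R^i(\tr M,M^\ast)$ for $1\le i\le t$; knowing these have finite length does not make them vanish, and Lemma \ref{Functor injective support prime zero} gives no leverage here. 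The underlying reason is structural: your exact sequence $0\to N\to E\to E^t\to\cdots\to E\to 0$ (from dualizing the Koszul complex) has every intermediate term built out of $E_R(k)$, so the local freeness of $M$ on the punctured spectrum is useless along it. The ``tail of the injective resolution of $R$'' you gesture at does not help either, since for non-Gorenstein $R$ that resolution is unbounded and $E_R(k)$ may recur in every high degree.

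The paper's proof chooses $N$ differently and this is the whole point. After completing $R$, one takes the dualizing complex $D$, sets $K^i=\ker\partial^i$, and uses $N=K^{d-t}$. The crucial feature is that $D^{d-i}=\bigoplus_{\dim R/\p=i}E_R(R/\p)$ involves \emph{only} primes $\p\ne\m$ for $1\le i\le t$, so Lemma \ref{Functor injective support prime zero} and punctured-spectrum freeness force $\Tor_{t+1-i}^R(\tr M,M\otimes_R D^{d-i})=0$. Together with the short exact sequences $0\to K^{d-i}\to D^{d-i}\to K^{d-i+1}\to 0$ (available because $H^j(D)=0$ for $j>d-t$), this lets the degree drop one step at a time from $\Tor_{t+1}^R(\tr M,M\otimes_R K^d)=0$ down to $\Tor_1^R(\tr M,M\otimes_R K^{d-t})=0$, with the punctured spectrum killing the middle term at each stage. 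That descent mechanism is precisely what your Koszul approach cannot supply; the ``precise analysis'' you defer is not a detail but the heart of the argument, and it requires the dualizing complex rather than a regular sequence.
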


\begin{proof}
We may assume that $R$ is complete.
There is a Gorenstein local ring $S$ of dimension $d=\dim R$ such that $R$ is a homomorphic image of $S$.
Let $I:0\to I^0\to I^1\to\cdots I^d\to 0$ be a mininal injective resolution of $S$-module $S$.
Set $D=\Hom_S(R,I): 0\to D^0\xrightarrow{\partial^0} D^1\xrightarrow{\partial^1} \cdots D^d\xrightarrow{\partial^d}  0$, and $K^i=\ker \partial^i$ for each $0\le i\le d$. 
(The complex $D$ is called a dualizing complex.)
It is easy to see that 
\begin{equation}\label{punc spec free depth t no.1}
D^i\cong\bigoplus_{\substack{\p\in\spec R \\ \dim R/\p=d-i}} E_R(R/\p)
\quad {\rm and} \quad 
H^j(D)=\Ext_S^j(R,S)=0
\end{equation}
for all $0\le i\le d$ and all $d-t+1\le j\le d$.
Hence, for any $1\le i\le t$, there exists an exact sequence
\begin{equation}\label{punc spec free depth t no.2}
0 \to K^{d-i} \to D^{d-i} \to K^{d-i+1} \to 0.
\end{equation}

Note that by Lemma \ref{Tor Ext injective}, for any injective $R$-module $J$ and any $0\le j\le t$, we have $\Tor_j^R(M.J)\cong\Hom(\Ext_R^{j}(M,R),J)=0$.
We claim by induction on $i$ that $\Tor_j^R(M.K^{d-i})=0$ for all $0\le i\le t-1$ and all $1\le j\le t-i$.
Indeed, if $i=0$, then $K^d=D^d$ is an injective $R$-module.
Let $1\le i\le t-1$.
It follows from (\ref{punc spec free depth t no.2}) that for each $0\le j\le t-i$, there is an exact sequence $\Tor_{j+1}^R(M,K^{d-i+1})\to \Tor_{j}^R(M,K^{d-i}) \to\Tor_{j}^R(M,D^{d-i})$ of $R$-modules.
By the induction hypothesis, the claim holds.

This claim shows that for each $1\le i\le t$, the natural complex
\begin{equation}\label{punc spec free depth t no.3}
0 \to M\otimes_R K^{d-i} \to M\otimes_R D^{d-i} \to M\otimes_R K^{d-i+1} \to 0
\end{equation}
is an exact sequence.
Also, we claim that $\Tor_{t+1-i}^R(\tr M, M\otimes_R K^{d-i})=0$ for all $0\le i\le t$.
We use induction on $i$.
If $i=0$, then by Lemma \ref{Tor Ext injective}, we obtain isomorphisms
$$
\Tor_{t+1}^R(\tr M, M\otimes_R K^{d})\cong
\Tor_{t+1}^R(\tr M, \Hom(M^{\ast},K^{d}))\cong
\Hom(\Ext_R^{t+1}(\tr M,M^{\ast}),K^{d})=0.
$$
Suppose $1\le i\le t$.
The exact sequence (\ref{punc spec free depth t no.3}) induces an exact sequence 
$$
\Tor_{t+2-i}^R(\tr M, M\otimes_R K^{d-i+1})\to \Tor_{t+1-i}^R(\tr M, M\otimes_R K^{d-i}) \to\Tor_{t+1-i}^R(\tr M, M\otimes_R D^{d-i}).
$$
By (\ref{punc spec free depth t no.1}) and Lemma \ref{Functor injective support prime zero}, we see that $\Tor_{t+1-i}^R(\tr M, M\otimes_R D^{d-i})=0$ since $M$ is locally free on the punctured spectrum of $R$.
So, the claim holds by the induction hypothesis.
In particular, we have $\Tor_1^R(\tr M, M\otimes_R K^{d-t})=0$.

The module $\Tor_t^R(k, K^d)\cong\Hom(\Ext_R^{t}(k,R),K^{d})$ is nonzero as $\depth R=t$.
The exact sequences (\ref{punc spec free depth t no.2}) induce isomorphisms $\Tor_t^R(k, K^d)\cong\Tor_{t-1}^R(k, K^{d-1})\cong\cdots\cong\Tor_1^R(k, K^{d-t+1})\cong k\otimes_R K^{d-t}$ by (\ref{punc spec free depth t no.1}) and Lemma \ref{Functor injective support prime zero}.
Therefore $k\otimes_R K^{d-t}$ is a nonzero module.
Lemma \ref{trans tor1 free lemma non f.g.} implies that $M$ is free.
\end{proof}

Note that if $\Ext_R^2(\tr M,M^{\ast})=0$, then the natural homomorphism from $M\otimes_R M^\ast$ to $\Hom_R(M^\ast,M^\ast)$ is surjective; see \cite[Proposition (2,6)(a)]{AB}.
It is easy to see that $M^\ast$ is free, and thus so is $M$.
Therefore, the assertion of Theorem \ref{punc spec free depth t} is easily shown without using our techniques of dualizing complexes when $t=\depth R=1$.
It is the case where $t\ge 2$ which is important for us.
In this case, we have an isomorphism $\Ext_R^{t+1}(\tr M,M^{\ast})\cong\Ext_R^{t-1}(M^{\ast},M^{\ast})$ since $\syz^2\tr M$ is isomorphic to $M^\ast$ up to free summands, and thus the above theorem can be rewritten as Corollary \ref{punc spec free depth 2 ijou R-dual ver}.

\begin{cor}\label{punc spec free depth 2 ijou R-dual ver}
Let $R$ be a local ring of depth $t\ge 2$.
Suppose that $\Ext_R^i(M,R)=0$ for all $1\le i\le t$ and $\Ext_R^{t-1}(M^{\ast},M^{\ast})=0$, and that $M$ is locally free on the punctured spectrum of $R$.
Then $M$ is free.
\end{cor}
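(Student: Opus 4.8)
The plan is to deduce this corollary directly from Theorem \ref{punc spec free depth t}, since the two statements share the hypotheses $\Ext_R^i(M,R)=0$ for $1\le i\le t$ and the condition that $M$ be locally free on the punctured spectrum. The only point requiring work is to show that the remaining hypothesis here, $\Ext_R^{t-1}(M^{\ast},M^{\ast})=0$, is equivalent to the hypothesis $\Ext_R^{t+1}(\tr M,M^{\ast})=0$ of that theorem. Thus everything reduces to establishing the single isomorphism
$$\Ext_R^{t+1}(\tr M,M^{\ast})\cong\Ext_R^{t-1}(M^{\ast},M^{\ast}),$$
after which Theorem \ref{punc spec free depth t} applies to force $M$ to be free.

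To build this isomorphism I would combine two standard facts. First, the Convention records that $M^{\ast}\cong\syz^2\tr M$ up to free summands. Second, for any $R$-modules $X,Y$ and any integer $j\ge 1$ there is a dimension-shift isomorphism $\Ext_R^{j}(\syz X,Y)\cong\Ext_R^{j+1}(X,Y)$, read off from the exact sequence $0\to\syz X\to F\to X\to0$ with $F$ free, using that $\Ext_R^{i}(F,Y)=0$ for all $i\ge1$. Applying the shift twice with $X=\tr M$ and $j=t-1$ gives $\Ext_R^{t+1}(\tr M,M^{\ast})\cong\Ext_R^{t-1}(\syz^2\tr M,M^{\ast})$; this is legitimate precisely because $t-1\ge1$, which is exactly where the hypothesis $t\ge2$ is used. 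Replacing $\syz^2\tr M$ by $M^{\ast}$ in the first argument is harmless for the same reason: the two differ only by free summands, and $\Ext_R^{t-1}(F,M^{\ast})=0$ for free $F$ since $t-1\ge1$. This yields $\Ext_R^{t-1}(\syz^2\tr M,M^{\ast})\cong\Ext_R^{t-1}(M^{\ast},M^{\ast})$ and completes the chain.

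I expect no genuine obstacle here. The argument is a bookkeeping exercise in dimension-shifting, and the only point demanding care is the repeated appeal to $t-1\ge1$, both to keep the shifted $\Ext$-indices positive and to discard free summands; this is precisely the role of the assumption $t\ge2$ that distinguishes this corollary from the $t=1$ case already disposed of in the remark preceding it. All the substantive content has been absorbed into Theorem \ref{punc spec free depth t}, so this statement is in the end a translation of its hypothesis into a self-$\Ext$ condition on $M^{\ast}$.
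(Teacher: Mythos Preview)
Your proposal is correct and follows exactly the paper's approach: the paper derives the corollary from Theorem \ref{punc spec free depth t} via the isomorphism $\Ext_R^{t+1}(\tr M,M^{\ast})\cong\Ext_R^{t-1}(M^{\ast},M^{\ast})$, justified precisely because $\syz^2\tr M$ is isomorphic to $M^\ast$ up to free summands and $t\ge 2$. Your explanation of the dimension-shifting and of why $t-1\ge 1$ is needed to discard free summands is in fact more detailed than what the paper writes.
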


To replace the condition on the vanishing of a self-extension of $M^\ast$ with that of $M$, one elementary lemma is needed to be prepared.

\begin{lem}\label{ext syzygy shift lemma}
Let $R$ be a local ring, and let $s, t>0$ be integers.
Suppose that $\Ext_R^i(M,R)=0$ for all $s\le i\le s+t$.
Then $\Ext_R^s(M,M)\cong\Ext_R^s(\Omega^t M,\Omega^t M)$.
\end{lem}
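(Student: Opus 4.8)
The plan is to establish the isomorphism by two successive dimension-shift arguments, one in each variable, passing through the intermediate module $\Ext_R^{s+t}(M,\Omega^t M)$. The only tools needed are the standard facts that $\Ext_R^{\ge 1}(F,-)=0$ for a free module $F$, and that the minimal free resolution $\cdots\to F_1\to F_0\to M\to 0$ truncates to a free resolution of each syzygy, giving short exact sequences $0\to\Omega^{n+1}M\to F_n\to\Omega^n M\to 0$.

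First I would shift the first variable. Applying $\Hom_R(-,N)$ to these short exact sequences, for an arbitrary module $N$, and using $\Ext_R^{\ge 1}(F_n,N)=0$, one obtains $\Ext_R^i(\Omega M,N)\cong\Ext_R^{i+1}(M,N)$ for all $i\ge 1$, and iterating, $\Ext_R^i(\Omega^t M,N)\cong\Ext_R^{i+t}(M,N)$ for $i\ge 1$. Taking $N=\Omega^t M$ and $i=s$, which is legitimate since $s>0$, yields $\Ext_R^s(\Omega^t M,\Omega^t M)\cong\Ext_R^{s+t}(M,\Omega^t M)$. This step uses no hypothesis on $M$.

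Next I would shift the second variable back down. Applying $\Hom_R(M,-)$ to the same short exact sequences produces connecting maps $\Ext_R^{i-1}(M,\Omega^n M)\to\Ext_R^i(M,\Omega^{n+1}M)$, which are isomorphisms as soon as $\Ext_R^{i-1}(M,F_n)$ and $\Ext_R^i(M,F_n)$ both vanish; since $F_n$ is free, these groups are direct sums of copies of $\Ext_R^\bullet(M,R)$. Running this for $n=t-1,t-2,\dots,0$ to assemble the chain $\Ext_R^{s+t}(M,\Omega^t M)\cong\Ext_R^{s+t-1}(M,\Omega^{t-1}M)\cong\cdots\cong\Ext_R^s(M,M)$, the degrees in which $\Ext_R^\bullet(M,R)$ must vanish are exactly $s,s+1,\dots,s+t$, which is precisely the range in which the hypothesis guarantees $\Ext_R^i(M,R)=0$. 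Combining the two chains gives the asserted isomorphism.

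The argument is essentially routine dimension shifting, so there is no serious obstacle; the one point requiring care is the bookkeeping in the second step. Each of the $t$ connecting isomorphisms requires two consecutive Ext-vanishings, and across all $t$ steps these consecutive pairs exactly exhaust the interval $s\le i\le s+t$. Verifying this is what confirms that the hypothesis ranging over the $t+1$ consecutive degrees $s,\dots,s+t$ is both necessary and sufficient for the argument to go through.
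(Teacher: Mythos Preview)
Your proposal is correct and takes essentially the same approach as the paper. The paper's proof simply reduces to the case $t=1$ by induction and then performs one shift in each variable via the short exact sequence $0\to\Omega M\to F\to M\to 0$; your version unrolls that induction into explicit chains of isomorphisms through the intermediate module $\Ext_R^{s+t}(M,\Omega^t M)$, but the underlying dimension-shifting argument is identical.
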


\begin{proof}
We may assume that $t=1$.
The natural exact sequence $0\to \Omega M\to F \to M\to 0$, where $F$ is a free $R$-module induces an exact sequence $\Ext_R^s(M,F)\to\Ext_R^s(M,M)\to\Ext_R^{s+1}(M,\Omega M)\to\Ext_R^{s+1}(M,F)$.
We have $\Ext_R^s(M,M)\cong\Ext_R^{s+1}(M,\Omega M)\cong\Ext_R^s(\Omega M,\Omega M)$ by assumption.
\end{proof}

\begin{rmk}\label{fpd iff free}
Suppose that $R$ is local and $M$ has finite projective dimension.
If $\Ext_R^i(M,R)=0$ for all $1\le i\le \depth R$ or $\depth M\ge\depth R$, then it follows from \cite[Page 154, Lemma 1(iii)]{Mat} and the Auslander--Buchsbaum formula that $M$ is free.
\end{rmk}

Let $m,n \in\mathbb{Z}_{\ge 0}\cup\{\infty\}$.
Denote by $\G_{m,n}$ the full subcategory of the category of finitely generated $R$-modules consisting of modules $M$ such that $\Ext^i_R(M,R)=0$ for all $1\le i\le m$ and $\Ext^j_R(\tr M,R)=0$ for all $1\le j\le n$.

\begin{prop}\label{punc spec free depth 2 ijou}
Let $R$ be a local ring of depth $t\ge 2$.
Suppose that $\Ext_R^{t-1}(M,M)=0$ and $M$ is locally free on the punctured spectrum of $R$.
Then $M$ is free in each of the two cases below.

{\rm (1)} $M \in \G_{2t-r+1,r}$ for some $0\le r\le 2t+1$. \quad
{\rm (2)} $\Ext_R^i(M,R)=0$ for all $1\le i\le 2t+1$.
\end{prop}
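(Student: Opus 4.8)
The plan is to deduce everything from Corollary \ref{punc spec free depth 2 ijou R-dual ver}. First I would observe that (2) is merely the case $r=0$ of (1): the class $\G_{2t+1,0}$ consists precisely of the modules with $\Ext_R^i(M,R)=0$ for $1\le i\le 2t+1$, since the condition on $\tr M$ is vacuous. So it suffices to prove (1). The whole difficulty is that Corollary \ref{punc spec free depth 2 ijou R-dual ver} is stated in terms of the self-extension of the dual, whereas the hypothesis here is $\Ext_R^{t-1}(M,M)=0$; the task is to convert this, together with the $\G$-conditions, into an input for that corollary applied to a suitable dual module.

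The clean case is when the second index is large, say $M\in\G_{m,n}$ with $n\ge t+2$, which covers all $r\ge t+2$. Then $n\ge 2$ gives $\Ext_R^1(\tr M,R)=\Ext_R^2(\tr M,R)=0$, so $M$ is reflexive; and since $M^\ast\cong\syz^2\tr M$ up to free summands, $\Ext_R^i(M^\ast,R)\cong\Ext_R^{i+2}(\tr M,R)=0$ for $1\le i\le n-2$, in particular for $1\le i\le t$. As $M^\ast$ is again reflexive and locally free on the punctured spectrum, and $\Ext_R^{t-1}(M^{\ast\ast},M^{\ast\ast})\cong\Ext_R^{t-1}(M,M)=0$, Corollary \ref{punc spec free depth 2 ijou R-dual ver} applied to $M^\ast$ shows $M^\ast$ is free; hence $M\cong M^{\ast\ast}$ is free.

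To reach this situation from an arbitrary $r\le t+1$ I would pass to syzygies, the point being the bookkeeping statement: if $M\in\G_{m,n}$ with $m\ge 1$, then $\syz M\in\G_{m-1,n+1}$. The first index is immediate from $\Ext_R^i(\syz M,R)\cong\Ext_R^{i+1}(M,R)$. For the transpose index, the value $j=1$ holds because $\syz M$ is a first syzygy, hence torsionless, so $\Ext_R^1(\tr\syz M,R)=0$; and for $j\ge 2$ one uses that $\Ext_R^1(M,R)=0$ forces the first syzygy of $\tr\syz M$ to be $\tr M$ up to free summands, yielding a short exact sequence $0\to\tr M\to(\text{free})\to\tr\syz M\to 0$ and thus $\Ext_R^j(\tr\syz M,R)\cong\Ext_R^{j-1}(\tr M,R)$. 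Iterating this $k=t+2-r$ times lands $\syz^kM$ in $\G_{t-1,t+2}$, exactly the clean case. Crucially the hypothesis transfers: by Lemma \ref{ext syzygy shift lemma}, $\Ext_R^{t-1}(\syz^kM,\syz^kM)\cong\Ext_R^{t-1}(M,M)=0$, which is legitimate precisely because $\Ext_R^i(M,R)=0$ for $t-1\le i\le (t-1)+k=2t-r+1$. Having obtained that $\syz^kM$ is free, the finiteness of $\pd M$ together with $\Ext_R^i(M,R)=0$ for $1\le i\le t=\depth R$ gives freeness of $M$ by Remark \ref{fpd iff free}.

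I expect the main obstacle to be the bookkeeping statement, and in particular the exact arithmetic that makes the two constraints compatible: reaching second index $t+2$ demands $k\ge t+2-r$ syzygies, while transferring $\Ext_R^{t-1}(M,M)=0$ through Lemma \ref{ext syzygy shift lemma} permits only $k\le t+2-r$, so $k=t+2-r$ is forced, and one must check that the first index $m=2t-r+1$ stays positive along the whole iteration (which holds since $t\ge 2$). Verifying the transpose index of the syzygy—separating the torsionless case $j=1$ from the shift $\syz\tr\syz M\cong\tr M$ used for $j\ge 2$—is the technical heart of the argument.
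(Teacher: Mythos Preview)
Your proof is correct and follows essentially the same strategy as the paper's: pass to a suitable syzygy $N=\syz^sM$ (with $s=\max\{t-r+2,0\}$, which is your $k$ when $r\le t+1$ and $0$ otherwise) so that the second $\G$-index becomes at least $t+2$, then use reflexivity of $N$ and the identification $N^\ast\cong\syz^2\tr N$ (up to free summands) to feed $N^\ast$ into Corollary~\ref{punc spec free depth 2 ijou R-dual ver}, transferring the self-Ext vanishing via Lemma~\ref{ext syzygy shift lemma} and recovering freeness of $M$ from Remark~\ref{fpd iff free}. The only difference is cosmetic: the paper handles both regimes uniformly by allowing $s=0$ and cites \cite[Proposition~1.1.1]{I} for the $\G_{m,n}$-bookkeeping under syzygies and transposes, whereas you separate the clean case from the syzygy-reduction case and supply that bookkeeping (in particular the relation $\syz(\tr\syz M)\cong\tr M$ when $\Ext_R^1(M,R)=0$) by hand.
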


\begin{proof}
We have only to show the proposition in case (1).
Set $s={\rm max}\{ t-r+2, 0 \}$.
We obtain inequalities $t+2\le r+s\le 2t+1$.
Since $M \in \G_{2t-r+1,r}$, we have $N:=\Omega^{s}M\in \G_{2t-r-s+1,r+s}\subseteq\G_{0,2}$ and $\syz^2\tr N\in \G_{r+s-2, 2t-r-s+3}\subseteq\G_{t,0}$ by \cite[Proposition 1.1.1]{I}.
Thus $N$ is reflexive by \cite[Proposition (2.6)(a)]{AB}.
As $N^{\ast}$ is isomorphic to $\syz^2\tr N$ up to free summands, we get $\Ext_R^i(N^{\ast},R)=0$ for all $1\le i\le t$.
Lemma \ref{ext syzygy shift lemma} yields that $\Ext_R^{t-1}((N^\ast)^\ast,(N^\ast)^\ast)\cong\Ext_R^{t-1}(N,N)\cong\Ext_R^{t-1}(M,M)=0$.
By Corollary \ref{punc spec free depth 2 ijou R-dual ver}, $N^\ast$ is free, and so is $N\cong(N^\ast)^\ast$.
It follows from Remark \ref{fpd iff free} that $M$ is free when $s\ne 0$.
\end{proof}

\begin{rmk}\label{Tate cohomology reflexive free}
In Proposition \ref{punc spec free depth 2 ijou}, the condition $t=\depth R\ge 2$ is imposed as an assumption.
The difference between the cases $t\ge 2$ and $t\le 1$ can also be considered as follows.
Suppose that $M\in\G_{\infty,\infty}$ and the Tate cohomology module $\widehat\Ext{}^{t-1}_R(M,M)$ is zero.
We see that $\Ext_R^{t-1}(M,M)=0$ when $t\ge 2$; see \cite{R} for instance.
In other words, the condition $\widehat\Ext{}^{t-1}_R(M,M)=0$ can be explained in terms of vanishing of a self-extension of $M$.
When $t\le 1$, it cannot be done as such.
However, a relationship exists between this condition and Theorem \ref{punc spec free depth t}.
If $t=1$, then we have $\underline{\Hom}{}_R(M,M)\cong\widehat\Ext{}^{0}_R(M,M)=0$, and thus $M$ is free.
Let $t=0$.
Then we get isomorphisms $0=\widehat\Ext{}^{-1}_R(M,M)\cong\widehat\Ext{}^{-1}_R(\Omega^2 M,\Omega^2 M)\cong\Ext{}^{1}_R(M,\Omega^2 M)\cong\Ext{}^{1}_R(\tr N,N^\ast)$, where $N=\tr M$.
This Ext module is the same as the one in Theorem \ref{punc spec free depth t}.
In particular, Theorem \ref{punc spec free depth t} and Proposition \ref{punc spec free depth 2 ijou} correspond to \cite[Theorem 8]{Ar} when $R$ is Gorenstein.
\end{rmk}

Let $n\ge 0$ be an integer.
We denote by $\x^n(R)$ the set of prime ideals of $R$ with height at most $n$.
The proposition below gives a generalization of Corollary \ref{punc spec free depth 2 ijou R-dual ver} and Proposition \ref{punc spec free depth 2 ijou}.
Needless to say, (4) is a special case of (3).

\begin{prop}\label{S2 free subset Spec(R)}
Let $X$ be a subset of $\spec R$ containing $\x^1(R)$.
Put $s:={\rm inf}\{ \depth R_\p \mid \p\in \spec R\setminus X \}$ and $t:={\rm sup}\{ \depth R_\p \mid \p\in \spec R\setminus X \}$.
Suppose that $R$ satisfies $(\S_2)$ and that $M$ is locally free on $X$.
Then $M$ is projective if one of the following four conditions is satisfied.
\begin{enumerate}[\rm(1)]
\item
$\Ext_R^i(M,R)=\Ext_R^j(M^{\ast},M^{\ast})=0$ for all $1\le i\le t$ and $s-1\le j\le t-1$.
\item
$\Ext_R^i(M^{\ast},R)=\Ext_R^j(M,M)=0$ for all $1\le i\le t$ and $s-1\le j\le t-1$, and $M$ satisfies $(\S_2)$.
\item
$M \in \G_{m,n}$ for some $m,n\ge 0$ such that $m+n=2t+1$, and $\Ext_R^j(M,M)=0$ for all $s-1\le j\le t-1$.
\item
$\Ext_R^i(M,R)=\Ext_R^j(M,M)=0$ for all $1\le i\le 2t+1$ and $s-1\le j\le t-1$.
\end{enumerate}
\end{prop}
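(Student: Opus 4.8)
The plan is to establish projectivity by a local-to-global argument, reducing each of the four conditions to the local theorems already proved. Since $M$ is finitely generated, it is projective if and only if $M_\p$ is free for every $\p\in\spec R$, so it suffices to prove the latter; and if $\spec R\setminus X=\emptyset$ the claim is immediate because $M$ is free on all of $X=\spec R$. I would argue by induction on $\height\p$ that $M_\p$ is free. If $\height\p\le 1$ then $\p\in\x^1(R)\subseteq X$ and $M_\p$ is free by hypothesis, and if $\height\p\ge 2$ with $\p\in X$ there is nothing to prove. So suppose $\p\in\spec R\setminus X$ with $\height\p\ge 2$. Every prime $\q\subsetneq\p$ satisfies $\height\q<\height\p$, so by the induction hypothesis $M_\q$ is free; equivalently, $M_\p$ is locally free on the punctured spectrum of $R_\p$. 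Writing $t_\p=\depth R_\p$, the condition $(\S_2)$ together with $\height\p\ge 2$ gives $t_\p\ge 2$, while the definitions of $s$ and $t$ give $s\le t_\p\le t$. Thus $R_\p$ is local of depth $t_\p\ge 2$ with $M_\p$ locally free on its punctured spectrum, and it remains only to check the Ext-hypotheses of the appropriate local result in each case.

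For case (1), localizing yields $\Ext_{R_\p}^i(M_\p,R_\p)=0$ for $1\le i\le t_\p$ (since $t_\p\le t$) and, using $(M^\ast)_\p\cong(M_\p)^\ast$, $\Ext_{R_\p}^{t_\p-1}((M_\p)^\ast,(M_\p)^\ast)=0$ (since $s-1\le t_\p-1\le t-1$). Corollary \ref{punc spec free depth 2 ijou R-dual ver} then shows $M_\p$ is free. For case (3), the vanishing defining $\G_{m,n}$ localizes, and since $(\tr_R M)_\p$ coincides with $\tr_{R_\p}M_\p$ up to free summands (which does not affect $\Ext^{\ge 1}$), we get $M_\p\in\G_{m,n}$ over $R_\p$. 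As $m+n=2t+1\ge 2t_\p+1$ and $m,n\ge 0$, the set of integers $r$ with $\max\{0,2t_\p+1-m\}\le r\le\min\{n,2t_\p+1\}$ is nonempty (the binding inequality is $2t_\p+1\le m+n$); for such an $r$ we have $2t_\p-r+1\le m$ and $r\le n$, hence $M_\p\in\G_{m,n}\subseteq\G_{2t_\p-r+1,r}$ with $0\le r\le 2t_\p+1$. Combined with $\Ext_{R_\p}^{t_\p-1}(M_\p,M_\p)=0$, Proposition \ref{punc spec free depth 2 ijou}(1) gives that $M_\p$ is free. Case (4) is the instance $m=2t+1$, $n=0$ of (3), so it needs no separate treatment.

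For case (2) I would reduce to case (1) applied to $M^\ast$. First, $M$ is reflexive: it is free, hence reflexive, at every prime of height at most one (as $X\supseteq\x^1(R)$), and it satisfies $(\S_2)$, so a standard reflexivity criterion over the $(\S_2)$-ring $R$ gives $M\cong M^{\ast\ast}$. Now $M^\ast$ is free on $X$ (being the dual of a module that is free there), its dual conditions $\Ext_R^i(M^\ast,R)=0$ hold for $1\le i\le t$ by hypothesis, and $\Ext_R^j((M^\ast)^\ast,(M^\ast)^\ast)\cong\Ext_R^j(M,M)=0$ for $s-1\le j\le t-1$ by reflexivity. Hence $M^\ast$ satisfies the hypotheses of case (1); that case shows $M^\ast$ is projective, and reflexivity of $M$ then gives that $M\cong(M^\ast)^\ast$ is projective.

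The step I expect to demand the most care is the reflexivity invocation in case (2): the passage from the self-Ext condition on $M$ to one on $(M^\ast)^\ast$ rests entirely on $M\cong M^{\ast\ast}$, which is the one ingredient not already packaged in the local theorems and which must be extracted from the $(\S_2)$ hypothesis on $M$ together with freeness in codimension one. The remaining subtleties are bookkeeping common to all cases: verifying that $\Ext$, the dual $(-)^\ast$, and the transpose $\tr$ commute with localization (the transpose only up to free summands, which is harmless for $\Ext^{\ge 1}$), and confirming that the fixed degree windows $1\le i\le t$ and $s-1\le j\le t-1$ always contain the single relevant degrees $t_\p$ and $t_\p-1$ attached to each prime of depth $t_\p\in[s,t]$.
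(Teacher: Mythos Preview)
Your proposal is correct and follows essentially the same route as the paper: reduce to a local check, use induction (you on $\height\p$, the paper on $\dim R$ after localizing) to get $M$ locally free on the punctured spectrum, then invoke Corollary~\ref{punc spec free depth 2 ijou R-dual ver} and Proposition~\ref{punc spec free depth 2 ijou} for cases (1), (3), (4), and handle case (2) by proving $M$ reflexive (the paper cites \cite[Theorem 3.6]{EG}) and applying case (1) to $M^\ast$. Your treatment is in fact more explicit than the paper's in checking, for case (3), that $M_\p\in\G_{m,n}$ with $m+n=2t+1\ge 2t_\p+1$ can be squeezed into some $\G_{2t_\p-r+1,r}$ with $0\le r\le 2t_\p+1$; the paper leaves this implicit.
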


\begin{proof}
It is enough to show that $M_\p$ is a free $R_\p$-module for all prime ideals $\p$ of $R$.
So, we may assume that $R$ is local.
We prove the corollary by induction on $d=\dim R$.
We know that the assertion holds true for $d\le 1$ by assumption.
Suppose $d\ge 2$.
We may assume that $X\ne \spec R$.
By the induction hypothesis, $M$ is locally free on the punctured spectrum of $R$.
Since $R$ satisfies $(\S_2)$, we have $\depth R\ge 2$.
In cases (1), (3) and (4), Corollary \ref{punc spec free depth 2 ijou R-dual ver} and Proposition \ref{punc spec free depth 2 ijou} conclude that $M$ is free, respectively.
In case (2), by \cite[Theorem 3.6]{EG}, we see that $M$ is reflexive.
Hence (1) implies that $M^\ast$ is free, and thus so is $M$.
\end{proof}

\begin{rmk}\label{CM version of S2 free subset Spec(R)}
The ring $R$ is called \textit{Cohen-Macaulay} if $\depth R_\p\ge \dim R_\p$ for every prime ideal $\p$ of $R$. 
Let $R$ be a $d$-dimensional Cohen-Macaulay ring.
For any integer $1\le n\le d-1$, it is easy to see that ${\rm inf}\{ \depth R_\p \mid \p\in \spec R\setminus \x^n(R) \}=n+1$ and ${\rm sup}\{ \depth R_\p \mid \p\in \spec R\setminus \x^n(R) \}=d$.
\end{rmk}

In particular, noting Remark \ref{fpd iff free}, we obtain Theorem \ref{AR conjecture (S2) introduction1} mentioned in Section 1 as a simple corollary of Proposition \ref{S2 free subset Spec(R)}.

\begin{cor}\label{S2 free height less than 2}
Suppose that $R$ satisfies $(\S_2)$ and $M$ is locally of finite projective dimension on $\x^1(R)$.
Then $M$ is projective if one of the following four conditions is satisfied, where $d=\dim R$.
\begin{enumerate}[\rm(1)]
\item
$\Ext_R^i(M,R)=\Ext_R^j(M^{\ast},M^{\ast})=0$ for all $1\le i\le d$ and $1\le j\le d-1$.
\item
$\Ext_R^i(M^{\ast},R)=\Ext_R^j(M,M)=0$ for all $1\le i\le d$ and $1\le j\le d-1$, and $M$ satisfies $(\S_2)$.
\item
$M \in \G_{m,n}$ for some $m,n\ge 0$ such that $m+n=2d+1$, and $\Ext_R^j(M,M)=0$ for all $1\le j\le d-1$.
\item
$\Ext_R^i(M,R)=\Ext_R^j(M,M)=0$ for all $1\le i\le 2d+1$ and $1\le j\le d-1$.
\end{enumerate}
\end{cor}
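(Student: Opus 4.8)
The plan is to apply Proposition~\ref{S2 free subset Spec(R)} with the choice $X=\x^1(R)$, so that $\spec R\setminus X=\{\p\mid\height\p\ge 2\}$. The first thing I would check is that this choice makes the numerical ranges fit. Since $R$ satisfies $(\S_2)$, every prime $\p$ with $\height\p\ge 2$ has $\depth R_\p\ge 2$, whence $s:=\inf\{\depth R_\p\mid\height\p\ge 2\}\ge 2$; and since $\depth R_\p\le\dim R_\p=\height\p\le d$ in general, we get $t:=\sup\{\depth R_\p\mid\height\p\ge 2\}\le d$. Therefore $[1,t]\subseteq[1,d]$, $[1,2t+1]\subseteq[1,2d+1]$ and $[s-1,t-1]\subseteq[1,d-1]$, so in cases (1), (2) and (4) the Ext-vanishing assumed in the corollary contains exactly what the proposition requires. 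For case (3), from $M\in\G_{m,n}$ with $m+n=2d+1\ge 2t+1$ I would choose $m'\le m$ and $n'\le n$ with $m'+n'=2t+1$; then $M\in\G_{m,n}\subseteq\G_{m',n'}$, matching the proposition.

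The one real gap is that Proposition~\ref{S2 free subset Spec(R)} requires $M$ to be locally free on $X=\x^1(R)$, while the corollary only assumes $M$ is locally of finite projective dimension there. Bridging this is exactly where Remark~\ref{fpd iff free} is used, and it is the step I expect to need the most care. Fix $\p\in\x^1(R)$. Because $R$ satisfies $(\S_2)$ and $\height\p\le 1$, we have $\depth R_\p=\height\p\le 1$; thus $R_\p$ is Cohen--Macaulay of depth at most one and $M_\p$ has finite projective dimension. By Remark~\ref{fpd iff free} it suffices to verify, at each such $\p$, that either $\Ext_{R_\p}^i(M_\p,R_\p)=0$ for all $1\le i\le\depth R_\p$, or $\depth M_\p\ge\depth R_\p$.

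I would then run through the four cases. In (1) and (4) the hypothesis $\Ext_R^i(M,R)=0$ for $1\le i\le d$ localizes to the first alternative, since $\depth R_\p\le 1\le d$. In (2) the condition $(\S_2)$ on $M$ gives $\depth M_\p\ge\min\{2,\height\p\}=\height\p=\depth R_\p$, the second alternative. Case (3) is the delicate one. Writing $m+n=2d+1$: if $m\ge 1$ then $\Ext_R^1(M,R)=0$ gives the first alternative (only $i=1$ is needed, as $\depth R_\p\le 1$); if instead $m=0$ then $n=2d+1\ge 1$, so $\Ext_R^1(\tr M,R)=0$, and the exact sequence $0\to\Ext_R^1(\tr M,R)\to M\to M^{\ast\ast}$ of \cite{AB} shows $M$ is torsionless, hence a submodule of a free module; this forces $\depth M_\p\ge 1\ge\depth R_\p$ at every $\p$ with $\depth R_\p\ge 1$, while the height-zero primes give freeness automatically. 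In every case $M_\p$ is free, so $M$ is locally free on $\x^1(R)$.

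With $M$ now locally free on $X=\x^1(R)$ and the inequalities $s\ge 2$, $t\le d$ established, each condition of the corollary implies the correspondingly numbered condition of Proposition~\ref{S2 free subset Spec(R)}, and the proposition yields that $M$ is projective. The main obstacle is precisely this upgrade from finite projective dimension to freeness on $\x^1(R)$, and within it the subcase $m=0$ of (3): there is no Ext-vanishing for $M$ itself to invoke, so one must instead extract torsionlessness from the vanishing of $\Ext_R^1(\tr M,R)$.
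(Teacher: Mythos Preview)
Your proposal is correct and follows exactly the approach the paper indicates: apply Proposition~\ref{S2 free subset Spec(R)} with $X=\x^1(R)$, and use Remark~\ref{fpd iff free} to upgrade ``locally of finite projective dimension'' to ``locally free'' on $\x^1(R)$. The paper merely states this in one line, whereas you have carefully supplied the numerical comparisons $s\ge 2$, $t\le d$ and the case-by-case verification (including the torsionless trick for the $m=0$ subcase of (3)) that the paper leaves implicit.
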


The following two corollaries can be shown similarly as in the proof of \cite[Corollaries 3.10 and 3.11]{KOT}, respectively.

\begin{cor}\label{recover of KOT no Corollary1}
Suppose that $R$ satisfies $(\S_2)$ and is locally a complete intersection in codimension one.
Then $M$ is projective in each of the following two cases, where $d=\dim R$.
\begin{enumerate}[\rm(1)]
\item
$\Ext_R^i(M^{\ast},R)=\Ext_R^j(M,M)=0$ for all $1\le i\le d$ and $1\le j\le \max\{2,d-1\}$, and $M$ satisfies $(\S_2)$.
\item
$\Ext_R^i(M,R)=\Ext_R^j(M,M)=0$ for all $1\le i\le 2d+1$ and $1\le j\le \max\{2,d-1\}$.
\end{enumerate}
\end{cor}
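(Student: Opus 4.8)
The plan is to deduce both cases from Corollary \ref{S2 free height less than 2}, the only hypothesis of that corollary not yet assumed being that $M$ be locally of finite projective dimension on $\x^1(R)$; supplying this is precisely where the complete intersection assumption is used. First observe that the Ext-vanishing ranges imposed here are wider than the ones in Corollary \ref{S2 free height less than 2}: since $\max\{2,d-1\}\ge d-1$, the hypotheses of case (1) contain those of Corollary \ref{S2 free height less than 2}(2), and the hypotheses of case (2) contain those of Corollary \ref{S2 free height less than 2}(4). Consequently, once we have shown that $M$ is locally free on $\x^1(R)$ (hence in particular locally of finite projective dimension there), Corollary \ref{S2 free height less than 2} applies directly and gives that $M$ is projective in each case.

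So the whole content is to establish local freeness on $\x^1(R)$, and here the two cases are treated identically, since both assume $\Ext_R^j(M,M)=0$ for all $1\le j\le\max\{2,d-1\}$. Fix $\p\in\x^1(R)$. By hypothesis $R_\p$ is a complete intersection with $\dim R_\p\le 1$. Localizing the self-Ext assumption gives $\Ext_{R_\p}^j(M_\p,M_\p)=0$ for $1\le j\le\max\{2,d-1\}$; in particular this holds for the $\dim R_\p+1$ consecutive degrees $j=1,\dots,\dim R_\p+1$ (which is at most $2$). Over a complete intersection, vanishing of self-extensions in this many consecutive low degrees forces $M_\p$ to be free. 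This is the complete-intersection freeness criterion that takes over the role played by the finite-projective-dimension hypothesis in Corollary \ref{S2 free height less than 2}, and it is exactly the step carried out in the proof of \cite[Corollaries 3.10 and 3.11]{KOT}. I would invoke it as a black box here: mechanically it proceeds by propagating the vanishing through all higher degrees via the cohomology operators, thereby trivializing the cohomological support of $M_\p$ and yielding finite projective dimension, after which the self-Ext vanishing together with Remark \ref{fpd iff free} forces freeness. Thus $M$ is locally free on $\x^1(R)$, and the reduction to Corollary \ref{S2 free height less than 2} is complete.

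The main obstacle is this codimension-one step: extracting freeness of $M_\p$ over the complete intersection $R_\p$ from only finitely many vanishing self-extensions. This is precisely why the ranges are enlarged from $1\le j\le d-1$ to $1\le j\le\max\{2,d-1\}$: the bump guarantees at least $\dim R_\p+1=2$ consecutive self-Ext vanishings for every prime of height at most one, uniformly in $d$, which is what the complete-intersection criterion needs. Everything outside this step is formal, being either a containment of the Ext-vanishing ranges or a direct citation of Corollary \ref{S2 free height less than 2} and Remark \ref{fpd iff free}.
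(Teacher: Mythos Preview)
Your proposal is correct and follows essentially the same approach as the paper: the paper's proof also reduces to Corollary \ref{S2 free height less than 2} after invoking the argument of \cite[Corollary 3.10]{KOT} to verify that $M$ is locally free on $\x^1(R)$. Your write-up is more explicit about why the bump to $\max\{2,d-1\}$ is needed and how the hypotheses feed into the appropriate parts of Corollary \ref{S2 free height less than 2}, but the strategy is identical.
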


\begin{proof}
A similar argument to the proof of \cite[Corollary 3.10]{KOT} shows that $M$ is locally free on $\x^1(R)$.
The assertions follow from Corollary \ref{S2 free height less than 2}.
\end{proof}

\begin{cor}\label{recover of KOT no Corollary2}
Let $S$ be a locally excellent ring of dimension $d$ which is locally a complete intersection in codimension one.
Let $\xx=x_1,\dots,x_n$ be a sequence of elements of $S$ which is locally regular on $\V(\xx)$.
Assume that $R=S/(\xx)$ and that $S$ locally satisfies $(\S_2)$ on $\V(\xx)$.
Suppose that $\Ext_R^i(M,R)=\Ext_R^j(M,M)=0$ for all $1\le i\le 2d+1$ and $1\le j\le\max\{2,d-1\}$.
Then $M$ is projective.
\end{cor}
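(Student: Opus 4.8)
The plan is to pass from the possibly badly-behaved quotient $R=S/(\xx)$ to the ambient ring $S$ by \emph{lifting} the module $M$ along the regular sequence, and then to invoke Corollary \ref{recover of KOT no Corollary1} over $S$. This detour is essentially forced: since both depth and dimension drop by the full length $n$ on passing from $S$ to $R$, the ring $R$ need not satisfy $(\S_2)$, nor even $(\S_1)$, so Corollary \ref{S2 free height less than 2} cannot be applied to $R$ directly. The whole point is that $S$ \emph{is} $(\S_2)$ and locally a complete intersection in codimension one on $\V(\xx)$, so the freeness argument should be carried out there.

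First I would reduce to the local case. Projectivity of $M$ can be checked prime by prime, so it suffices to show that $M_{\bar\p}$ is free over $R_{\bar\p}$ for each $\bar\p\in\spec R$, that is, for each $\p\in\V(\xx)$. Localizing $S$ at such a $\p$, I may assume that $(S,\m)$ is local, that $\xx$ is a genuine regular sequence in $S$, and (as these properties are inherited by $S_\p$ for $\p\in\V(\xx)$, and excellence passes to localizations) that $S$ satisfies $(\S_2)$ and is locally a complete intersection in codimension one, while the Ext-vanishing hypotheses on $M$ persist. Write $S_k=S/(x_1,\dots,x_k)$, so that $S_0=S$ and $S_n=R$, and note that each $x_{k}$ is a nonzerodivisor on $S_{k-1}$.

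Next I would build the lift. Since $\max\{2,d-1\}\ge 2$ we have $\Ext_R^2(M,M)=0$, and the obstruction to lifting a module along a quotient by a single nonzerodivisor lies in its second self-extension module; hence $M$ lifts to an $S_{n-1}$-module $M^{(n-1)}$ on which $x_n$ is regular and with $M^{(n-1)}/x_nM^{(n-1)}\cong M$. The technical heart is that the Ext-vanishing ascends along this step. Because $x_n$ is $M^{(n-1)}$-regular, one has $M^{(n-1)}\otimes_{S_{n-1}}^{\mathbf{L}}R\simeq M$, whence $\Ext_{S_{n-1}}^i(M^{(n-1)},T)\cong\Ext_R^i(M,T)$ for every $R$-module $T$; feeding this into the long exact sequence of $\Ext_{S_{n-1}}(M^{(n-1)},-)$ applied to $0\to S_{n-1}\xrightarrow{x_n}S_{n-1}\to R\to 0$ yields short exact sequences $0\to\Ext_{S_{n-1}}^i(M^{(n-1)},S_{n-1})/x_n\to\Ext_R^i(M,R)\to(0:x_n)\to 0$, and the analogous ones with $T=M$ for self-extensions. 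Nakayama's lemma then upgrades the vanishing of $\Ext_R^i(M,R)$ and $\Ext_R^j(M,M)$ to that of $\Ext_{S_{n-1}}^i(M^{(n-1)},S_{n-1})$ and $\Ext_{S_{n-1}}^j(M^{(n-1)},M^{(n-1)})$ in the same index ranges. In particular $\Ext_{S_{n-1}}^2(M^{(n-1)},M^{(n-1)})=0$, so the construction iterates down the tower $S_n\to\cdots\to S_0$, producing an $S$-module $\tilde M$ with $\xx$ regular on it, $\tilde M/\xx\tilde M\cong M$, and $\Ext_S^i(\tilde M,S)=0$ for $1\le i\le 2d+1$, $\Ext_S^j(\tilde M,\tilde M)=0$ for $1\le j\le\max\{2,d-1\}$.

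Finally I would apply Corollary \ref{recover of KOT no Corollary1}(2) to the local ring $S$ and the module $\tilde M$: the ring satisfies $(\S_2)$ and is locally a complete intersection in codimension one, and the Ext-vanishing we have transferred contains the ranges required (since $\dim S\le d$), so $\tilde M$ is projective, hence free over the local ring $S$. Reducing modulo $\xx$ shows $M\cong\tilde M/\xx\tilde M$ is free over $R$, which completes the local case and thus the proof. I expect the main obstacle to be the bookkeeping around the lift: checking that the lifting obstruction genuinely vanishes at each stage of the tower and that the change-of-rings sequences transfer both families of Ext-vanishing \emph{without} shrinking their index ranges, so that Corollary \ref{recover of KOT no Corollary1} remains applicable at the bottom. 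The conceptual crux, by contrast, is only the decision to prove freeness over the well-behaved ring $S$ rather than over $R$.
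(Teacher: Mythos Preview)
Your overall strategy—reduce to a local ring, lift $M$ along the regular sequence $\xx$ to an $S$-module $\tilde M$, transfer the Ext-vanishing, and then invoke Corollary \ref{recover of KOT no Corollary1}(2) over $S$—is exactly the route taken in \cite[Corollary 3.11]{KOT}, which the paper simply cites. The Ext-transfer paragraph is correct: the isomorphism $\Ext_{S_{k-1}}^i(M^{(k-1)},T)\cong\Ext_{S_k}^i(M^{(k)},T)$ for $S_k$-modules $T$ follows from $x_k$ being $M^{(k-1)}$-regular, and the Nakayama argument then upgrades the vanishing of $\Ext_{S_k}^i(M^{(k)},S_k)$ and $\Ext_{S_k}^j(M^{(k)},M^{(k)})$ without any loss of range.

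There is, however, a genuine gap at the lifting step. The statement ``the obstruction to lifting along a single nonzerodivisor lies in $\Ext^2$'' is a \emph{formal} deformation statement: vanishing of $\Ext_R^2(M,M)$ guarantees compatible lifts to each $S_{k-1}/(x_k^m)$, but to obtain an actual lift to $S_{k-1}$ one needs $S_{k-1}$ to be $x_k$-adically complete (this is the hypothesis in the Auslander--Ding--Solberg lifting theorem). Your localized ring $(S,\m)$ need not be complete, so as written the construction of $M^{(n-1)}$, and hence of $\tilde M$, is unjustified.

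The fix—and this is precisely where the \emph{locally excellent} hypothesis is used, which you note but never exploit—is to pass to the $\m$-adic completion $\widehat S$ first. Because $S$ is excellent, $S\to\widehat S$ has geometrically regular fibers; hence $(\S_2)$ and the local complete intersection property in codimension one both ascend to $\widehat S$. Over the complete local ring $\widehat S$ the iterated lifting goes through, Corollary \ref{recover of KOT no Corollary1}(2) applies to $\widehat S$ and the lift, and freeness of $\widehat M$ over $\widehat R=\widehat S/(\xx)$ descends to freeness of $M$ over $R$. With this correction your argument is complete and matches the paper's.
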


\begin{proof}
Similarly, as in the proof of \cite[Corollary 3.11]{KOT}, it is seen that $M$ is projective; replace \cite[Corollary 3.10(2)]{KOT} with Corollary \ref{recover of KOT no Corollary1}(2) in the proof of \cite[Corollary 3.11]{KOT}.
\end{proof}

Recall a celebrated long-standing conjecture due to Auslander and Reiten \cite{AR}.

\begin{conj}[Auslander--Reiten]\label{AR conjecture}
Every $R$-module $M$ such that $\Ext^{>0}_R(M,M\oplus R)=0$ is projective.
\end{conj}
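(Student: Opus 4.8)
The statement as worded is the Auslander--Reiten conjecture in complete generality, with no hypothesis on $R$ whatsoever; this is a celebrated open problem, and no unconditional proof is known. I cannot therefore present a genuine proof, and the honest plan is to describe the strongest attack the machinery of this section makes available and to pinpoint exactly where it stalls. The one reduction that is unconditional is localization: the conclusion "$M$ is projective" is local on $\spec R$, and the hypothesis $\Ext^{>0}_R(M,M\oplus R)=0$ passes to every localization $R_\p$. So the plan begins by reducing to the case where $(R,\m,k)$ is local and $M$ satisfies $\Ext^i_R(M,R)=\Ext^i_R(M,M)=0$ for all $i>0$, and asking that such an $M$ be free.

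Granting this reduction, I would induct on $d=\dim R$ and try to propagate freeness from the punctured spectrum inward. The point is that if one already knew $M$ to be locally free on the punctured spectrum, then the self-vanishing $\Ext^{d-1}_R(M,M)=0$ together with $\Ext^i_R(M,R)=0$ for $1\le i\le 2d+1$ would let Proposition \ref{punc spec free depth 2 ijou}(2) --- powered by Theorem \ref{punc spec free depth t} and the analysis of the tail of the dualizing complex --- force $M$ to be free, precisely as in the proof of Proposition \ref{S2 free subset Spec(R)}. The role of the induction would be to supply the missing input "locally free on the punctured spectrum" from the vanishing of self-extensions of $M$ over the lower-dimensional local rings $R_\p$.

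The hard part, and the reason this scheme yields only the conditional results of this section rather than the full conjecture, is twofold. First, Theorem \ref{punc spec free depth t} and Corollary \ref{punc spec free depth 2 ijou R-dual ver} genuinely require $\depth R\ge 2$, i.e. Serre's condition $(\S_2)$: the dualizing-complex argument extracts freeness at the closed point by producing a nonzero module $k\otimes_R K^{d-t}$ to feed into Lemma \ref{trans tor1 free lemma non f.g.}, and when $\depth R\le 1$ this engine simply does not run. Second, the inductive transfer of freeness along the punctured spectrum needs a base case at $\dim R_\p\le 1$, which is exactly the hypothesis "locally holds in codimension one" of Corollary \ref{AR conjecture (S2) introduction2}. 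Removing either assumption --- the global $(\S_2)$ condition, or the codimension-one base case --- leaves the conjecture open: there is at present no method to start the induction when the low-codimension cases are unavailable, nor to run the tail-of-the-dualizing-complex argument once depth drops below $2$. It is precisely these two gaps that confine the present techniques to the $(\S_2)$, locally-projective-in-codimension-one setting, thereby proving the Auslander--Reiten conjecture for normal rings (Corollary \ref{AR conjecture (S2) introduction2}) while leaving the unconditional statement beyond reach.
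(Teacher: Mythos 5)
The statement is the Auslander--Reiten conjecture itself, which the paper states without proof and establishes only under additional hypotheses (Corollary \ref{AR conjecture over normal ring}); your assessment that no unconditional proof exists, together with your reduction to the local case and your account of how Proposition \ref{S2 free subset Spec(R)} propagates freeness inward by induction from the codimension-one base case, matches the paper's treatment exactly. One small correction to your diagnosis of where the engine stalls: Theorem \ref{punc spec free depth t} itself holds for any depth $t$ (the paper notes the case $t\le 1$ is easy, cf.\ Remark \ref{Tate cohomology reflexive free}) --- it is only the reformulation via $\Ext_R^{t-1}(M^\ast,M^\ast)$ in Corollary \ref{punc spec free depth 2 ijou R-dual ver} that needs $t\ge 2$ --- so the genuine obstructions are the $(\S_2)$ hypothesis and the codimension-one base case, which you correctly identify.
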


Corollary \ref{S2 free height less than 2} yields the following result, which gives affirmative answers to Conjecture \ref{AR conjecture}.

\begin{cor}\label{AR conjecture over normal ring}
Suppose that $R$ satisfies $(\S_2)$.
\begin{enumerate}[\rm(1)]
\item
If $\Ext^{>0}_R(M,M\oplus R)=0$ and $M$ is locally of finite projective dimension in codimension one, then $M$ is projective.
\item
Conjecture \ref{AR conjecture} holds for $R$ if it locally holds for $R$ in codimension one.
\item
Conjecture \ref{AR conjecture} holds true for every normal ring.
\end{enumerate}
\end{cor}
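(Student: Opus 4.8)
The plan is to deduce all three assertions from Corollary~\ref{S2 free height less than 2}, establishing them in the order (1), (2), (3), with each used to prove the next. For (1), I would note that the hypothesis $\Ext^{>0}_R(M,M\oplus R)=0$ unwinds to $\Ext_R^i(M,R)=0$ and $\Ext_R^j(M,M)=0$ for \emph{all} positive $i$ and $j$; in particular these hold for $1\le i\le 2d+1$ and $1\le j\le d-1$, which is exactly condition~(4) of Corollary~\ref{S2 free height less than 2}. Since $R$ satisfies $(\S_2)$ and $M$ is assumed locally of finite projective dimension on $\x^1(R)$, that corollary immediately gives that $M$ is projective. No separate treatment of the case $\dim R=\infty$ is required, since projectivity of a finitely generated module is a local property and Corollary~\ref{S2 free height less than 2} reduces matters to the local case, where each $R_\p$ has finite dimension.

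For (2), the task is to manufacture the extra hypothesis needed in (1). Assuming $\Ext^{>0}_R(M,M\oplus R)=0$, fix $\p\in\x^1(R)$. As $M$ is finitely generated, Ext commutes with localization, so $\Ext^{>0}_{R_\p}(M_\p,M_\p\oplus R_\p)=0$. Because the conjecture is assumed to hold for $R_\p$, the module $M_\p$ is free, hence of finite projective dimension over $R_\p$. Thus $M$ is locally of finite projective dimension in codimension one, and part (1) applies to conclude that $M$ is projective.

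For (3), I would invoke Serre's normality criterion: a normal ring satisfies $(\S_2)$ together with $(\mathsf{R}_1)$, so $R_\p$ is a regular local ring for every $\p\in\x^1(R)$. Over a regular local ring every finitely generated module has finite projective dimension, so the Auslander--Reiten conjecture holds there trivially: if $\Ext_R^{>0}(M,R)=0$ and $\pd M<\infty$, then $M$ is free by Remark~\ref{fpd iff free}. Hence the conjecture holds locally in codimension one, and part (2) completes the proof.

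I do not expect a genuine obstacle in this corollary: the substantive content lies in Theorem~\ref{punc spec free depth t} and Proposition~\ref{S2 free subset Spec(R)}, and what remains here is bookkeeping. The only points deserving a little care are that the Ext-vanishing hypotheses are compatible with localization (valid precisely because $M$ is finitely generated) and that Serre's criterion lets one read off $(\mathsf{R}_1)$ as finite projective dimension in codimension one, thereby feeding the output of the earlier results back into part (1).
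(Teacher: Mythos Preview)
Your proposal is correct and follows essentially the same route as the paper: deduce (1) from Corollary~\ref{S2 free height less than 2}(4), derive (2) from (1) by localizing the Ext-vanishing hypothesis to $\x^1(R)$, and obtain (3) from (2) via Serre's criterion and the fact that regular local rings satisfy the conjecture. The paper's proof is simply a terser version of the same argument, beginning with the reduction ``we may assume $R$ is local'' rather than your observation that Corollary~\ref{S2 free height less than 2} already handles this.
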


\begin{proof}
We may assume that $R$ is local.
The assertion (1) follows from Corollary \ref{S2 free height less than 2}(4) and yields that (2) holds.
Since an arbitrary regular local ring satisfies Conjecture \ref{AR conjecture}, (2) implies that (3) holds.
\end{proof}

The results obtained in this paper refine (or recover) a lot of results in the literature.

\begin{rmk}\label{AR conjecture over normal ring / improve and recover}
\begin{enumerate}[(1)]
\item
Proposition \ref{punc spec free depth 2 ijou}(2) is a non-Gorenstein version of \cite[Corollary 10]{Ar}.
Indeed, let $R$ be a Gorenstein ring of dimension $d\ge 2$.
It is seen that $\Ext_R^i(M,R)=0$ for all $i>d$ and that $M$ is maximal Cohen--Macaulay if and only if for all $1\le j\le d$, $\Ext_R^j(M,R)=0$.
\item
Proposition \ref{S2 free subset Spec(R)}(3) (resp. (4)) improves \cite[Theorem 3.9(1)]{KOT} (resp. \cite[Theorem 3.9(2)]{KOT}), that is to say, Proposition \ref{S2 free subset Spec(R)}(3) (resp. (4)) relaxes the assumption of $R$ being Cohen--Macaulay to $R$ satisfying $(\s_2)$; see Remark \ref{CM version of S2 free subset Spec(R)}.
\item
Proposition \ref{S2 free subset Spec(R)}(4) highly refines \cite[Corollary 1.6]{secm}.
Let $R$ and $M$ be as in \cite[Corollary 1.6]{secm} for some $1\le n\le d-1$.
Similarly as (1), we have $\Ext_R^i(M,R)=0$ for all $i>0$, and thus $M$ is free by Proposition \ref{S2 free subset Spec(R)}(4) and Remark \ref{CM version of S2 free subset Spec(R)}.
\item
Proposition \ref{S2 free subset Spec(R)}(4) partly refines \cite[Corollary 1.5]{ST}.
When $R$ satisfies $(\s_2)$, \cite[Corollary 1.5]{ST} asserts almost the same as Proposition \ref{S2 free subset Spec(R)}(4) for $X=\{ \p\in\spec R \mid$ The $R_\p$-module $M_\p$ is free$ \}$ under the assumption that $R$ is generically Gorenstein, that $\Ext^{>0}_R(M,R)=0$, and that either $\Ext_R^i(\Hom_R(M,M),R)=0$ for all $n<i\le\depth R$ or $\Hom_R(M,M)$ has finite G-dimension.
\item
Corollary \ref{S2 free height less than 2}(4) reaches the same conclusion as \cite[Theorem 3.14]{DEL} by assuming more vanishings of Ext modules and instead removing the assumption that $R$ is a quotient of a regular local ring that is locally Gorenstein in codimension one and contains $\Q$; see Remark \ref{fpd iff free}.
\item
Corollary \ref{recover of KOT no Corollary1} asserts the same as \cite[Theorem 3.10]{KOT} under the assumption that the ring $R$ satisfies $(\s_2)$, which is weaker than $R$ being Cohen--Macaulay.
In particular, Corollary \ref{recover of KOT no Corollary1}(1) considerably refines \cite[Theorem 1.3]{HL}, and assuming more vanishings of Ext modules, (2) highly improves \cite[Theorem 3.16]{DEL}; see \cite[(5) and (6) of Remark 3.14]{KOT}.
\item
The same conclusion as \cite[Theorem 1.5(2)]{GT} is deduced from Corollary \ref{recover of KOT no Corollary1}(2) by assuming more vanishings of Ext modules and instead removing the assumption that $R$ is Cohen--Macaulay, and that $M$ is a maximal Cohen--Macaulay $R$-module of rank one, and weakening normality to the local complete intersection property in codimension one.
\item
Assuming appropriate vanishings of Ext modules,
we can remove by Corollary \ref{recover of KOT no Corollary1}(2) the assumptions imposed in \cite[Corollary 1.6]{ST} that either $\Ext_R^i(\Hom_R(M,M),R)=0$ for all $2\le i\le\depth R$ or $\Hom_R(M,M)$ has finite G-dimension.
\item
Suppose that $S$, $\xx$, and $R$ are as in \cite[Corollary 3.11]{KOT}.
It follows that $S$ locally satisfies $(\S_2)$ on $\V(\xx)$ since $R=S/(\xx)$ is Cohen--Macaulay and $\xx$ is locally regular on $\V(\xx)$.
Hence, Corollary \ref{recover of KOT no Corollary2} improves \cite[Corollary 3.11]{KOT} and thus \cite[Main Theorem]{HL}; see \cite[Remark 3.14(7)]{KOT}.
\item
Simiarly as in the previous item (9), we see that (1) and (2) of Corollary \ref{AR conjecture over normal ring} extend \cite[(1) and (2) of Corollary 3.13]{KOT} from Cohen--Macaulay rings to the rings which satisfy $(\s_2)$.
\item
Conjecture \ref{AR conjecture} is known to hold for every Cohen--Macaulay normal ring by virtue of \cite[Corollary 1.3]{KOT}.
Corollary \ref{AR conjecture over normal ring}(3) shows that the conjecture also holds for an arbitrary normal ring, i.e. it refines \cite[Corollary 1.3]{KOT}.
\item
Conjecture \ref{AR conjecture} holds true if $R$ is a quotient of a regular local ring and is a normal ring containing $\Q$ \cite[Theorem 3.14]{DEL}.
In particular, every complete normal local ring of equicharacteristic zero satisfies the conjecture.
Note that since the normality is not necessarily stable under completion, it is not easy to remove from these results the assumption that $R$ is a quotient of a regular local ring or $R$ is complete.
Corollary \ref{AR conjecture over normal ring}(3), however, does make it happen.
\end{enumerate}
\end{rmk}

\begin{ac}
The author would like to thank his supervisor Ryo Takahashi for valuable comments.
\end{ac}

\end{document}